\theoremstyle{plain}
\newtheorem{theorem}{Theorem}[section]                                          
\newtheorem{lemma}[theorem]{Lemma}
\newtheorem{corollary}[theorem]{Corollary}
\theoremstyle{definition}
\theoremstyle{remark}
\newcommand{\Prb}{\mathbb{P}}
\newcommand{\Exp}{\mathbb{E}}
\title{On lower bounds for hypergeometric tails}
\author{
Jianhang Ai\thanks{Faculty of Electrical Engineering, Czech Technical University, Karlovo N\'am\v{e}st\'i 13, 12135, Prague 2, Czech Republic, e-mail: ai.jianhang@fel.cvut.cz}
\and
Christos Pelekis\thanks{Aristotle University of Thessaloniki, 
Department of Mathematics, 
541 24 Thessaloniki, Greece, e-mail: pelekis.chr@gmail.com, \, cpelekis@math.auth.gr }
}
\begin{document}

\maketitle

\begin{abstract}  
Let $n,k$ be positive integers such that $n\geq k$, 
and let $H$ be a hypergeometric random variable counting the  number of black marbles in a sample without replacement of size $k$ from an urn that contains $i\in \{1,\ldots, n\}$
black and $n - i$ white marbles. 
It is shown that 
\[
\mathbb{P}(H \ge \mathbb{E}(H)) \ge k/n\, , \, \text{when} \,\,  n\ge 8k \, . 
\]
Furthermore, provided that $1\le \Exp(H)\le \min\{i,k\}-1$ as well as that $\frac{(n-i)(n-k)}{n}>1$, it is shown that  
\[
\mathbb{P}(H\ge \mathbb{E}(H)) \,\ge\, \frac{e^{-1/12}}{4\sqrt{2}} \cdot \sqrt{\frac{n-1}{n}} \cdot\frac{ \sqrt{\text{Var}(H)} }{1 + \sqrt{1+ \frac{n-1}{n-k}\cdot\text{Var}(H)}}\, .
\]
Auxiliary results which may be of independent interest include an upper bound on the tail conditional expectation and a  lower bound on the mean absolute deviation of the hypergeometric distribution. 

\end{abstract}

\noindent
\emph{Keywords}: probability inequalities; lower bounds; hypergeometric distribution; binomial distribution; mean absolute deviation; tail conditional expectation; stochastic orders

\noindent
\emph{MSC (2020)}: 60E15;  62E15 

\section{Introduction, related work and main results} 

Throughout the text, given a positive integer $n$, we denote by $[n]$ the set $\{1,\ldots,n\}$. Given two random variables $X,Y$, we write  $X\sim Y$  to denote the fact that $X$ and $Y$ have the same distribution.
Given positive integers 
$n,k$ such that $n\ge k\ge 1$ and a positive integer $i\in [n]$, we denote by $\text{Hyp}(n,i,k)$ a hypergeometric random variable which counts the number of black marbles in a sample without replacement of size $k$ from an urn that contains $i$
black and $n - i$ white marbles. 
Formally, if $H\sim\text{Hyp}(n,i,k)$, we have   
\[
\Prb(H=j)=\frac{\binom{i}{j}\binom{n-i}{k-j}}{\binom{n}{k}},\quad \text{ for } \, 
j\in\{\max\{0,k-(n-i)\},\dots,\min\{k,i\}\} \, .
\]
Recall that $\Exp(H)= \frac{ik}{n}$ and $\text{Var}(H) = k\cdot\frac{i}{n}\cdot\frac{n-i}{n}\cdot\frac{n-k}{n-1}$. Finally, 
given a positive integer $n$ and a real $p\in [0,1]$, we denote by $\text{Bin}(n,p)$ a binomial random variable of parameters $n,p$.

In this article we shall be concerned with lower bounds on the probability that  a hypergeometric random variable exceeds its expected value. More precisely, let $H\sim \text{Hyp}(n,i,k)$. What is a sharp lower bound on the ``tail" $\Prb(H \ge ik/n)$? 

Such a question arises in several settings. 
For instance,  suppose that a smuggler has a certain quantity, say $1$, of illicit product that he needs to transport across the border. The smuggler employs $n$ agents, which he sends across the border simultaneously, knowing that exactly  
$k$ of the agents will be chosen uniformly at random and seized by the authorities.  
The smuggler decides to equally distribute the product over $i\in [n]$ agents; thus loading $i$ agents with an amount of $1/i$ of the product and $n-i$ agents with no amount. 
The operation remains profitable as long as he manages to keep the amount  not seized above the expected amount not seized.  Which choice of $i\in [n]$ maximizes the probability that the operation remains profitable? What is an upper bound on the probability that the operation remains profitable?

If $Y_i$ is the amount of the product that makes it across the border, then it holds $Y_i = 1 -\frac{1}{i} \cdot H_i$, for $H_i\sim\text{Hyp}(n,i,k)$, and the expected amount not seized is equal to $\Exp(Y_i)=1-\frac{1}{i}\cdot \frac{ik}{n} = \frac{n-k}{n}$. 
Hence,  the operation is profitable with probability   
\[
\Prb\left(Y_i >  \frac{n-k}{n}\right) = \Prb(H_i < ik/n) = 1 - \Prb(H_i \ge ik/n) \, .
\]  
In this article it is shown that, when $n\ge 8k$, it is optimal for the smuggler to load $n-1$ agents with no amount and one agent with the whole amount. 
 
Besides smuggling-versus-customs situations, the problem  arises in other settings, the most notable of which being the  \emph{Manickham-Mikl\'os-Singhi (MMS) conjecture}, which we briefly discuss here. Let $\mathcal{P}=\{x_1, \ldots,x_n\}$ be a population (a multiset) consisting of $n\ge 2$ real numbers whose sum is equal to zero. We sample $k\in [n]$ elements from $\mathcal{P}$ without replacement, and let $X_{\mathcal{P}}$ be the sum of the elements in our sample. The MMS conjecture asserts that, when $n\ge 4k$, it holds $\Prb(X_{\mathcal{P}}\ge 0)\ge \frac{k}{n}$. The bound is sharp, as can be seen by considering the population $\{1,-\frac{1}{n-1},\ldots,-\frac{1}{n-1}\}$. 
It is known (see Pokrovskiy~\cite{Pokrovskiy}) that the MMS conjecture holds true when $n\ge 10^{46}k$, and it has been conjectured by Aydinian and Blinovsky (see~\cite{Aydinian_Blinovsky}) that for every population $\mathcal{P}$ as above, and every $1\le k\le n$, it holds 
\[
\Prb(X_{\mathcal{P}}\ge 0) \ge \min_{i\in [n-1]} \, \Prb(H_i \ge ik/n) \, , \, \text{ where } \, H_i \sim\text{Hyp}(n,i,k) \, .
\]
Hence, provided that the Aydinian-Blinovsky conjecture holds true, the tails of  the hypergeometric distribution appear as extreme instances of the MMS conjecture, where each $H_i$ corresponds to a sample without replacement from the population $\mathcal{P}_i=\{\frac{1}{i},\ldots,\frac{1}{i}, \frac{-1}{n-i},\ldots,\frac{-1}{n-i}\}$ consisting of $i$ values that are equal to $\frac{1}{i}$, and $n-i$ values that are equal to $\frac{-1}{n-i}$. 

We also investigate the problem of estimating from below the probability that a hypergeometric random variable takes a value that is larger than or equal to its expectation. 
A closely related problem is to estimate from below the probability of a binomial random variable being larger than or equal to its expectation; a problem that has attracted considerable attention (see, for example,~\cite{Doerr, greenberg2014tight, Pelekis_Ramon, Pinelis}, and references therein).  For example, it is known (see Theorem~\ref{Greenberg_Mohri} below) that $\Prb(\text{Bin}(k,p)\ge kp)\ge 1/4$, when $kp>1$. 
However, the literature regarding the hypergeometric tail appears to be sporadic.  

There are, of course,  good reasons for this ``sporadicness".  By a result of Vatutin and Mikhailov (\cite{Vatutin_Mikhailov}, see also~\cite{Hui_Park}) it is known that a hypergeometric random variable is a sum of independent Bernoulli random variables. In other words, the hypergeometric distribution is a Poisson-binomial distribution. 
Hence, Berry-Esseen-type theorems for non-identically distributed independent summands apply. In particular, it is known (see~\cite[Formula~(17)]{Mattner_Schulz} for a more general result)  that, if
$H\sim\text{Hyp}(n,i,k)$,  it holds
\begin{equation}\label{Berry_Essen_bound}
\Prb(H\ge \Exp(H)) \ge \frac{1}{2} - \frac{0.5583}{\sqrt{\text{Var}(H)}} \, .
\end{equation}
It follows that $\Prb(H\ge \Exp(H))\ge 1/4$, when $\text{Var}(H)\ge (4\cdot0.5583)^2 \ge 4.98$. 
Note that the inequality~\eqref{Berry_Essen_bound} provides a non-trivial bound when $\text{Var}(H) \ge (2\cdot0.5583)^2 \ge 1.24$. 
Note also that while inequality~\eqref{Berry_Essen_bound} is simple, its proof is not, and it is natural to wonder whether one can obtain such a lower bound using ``first principles" of the hypergeometric law.

Moreover, by combining the above-mentioned result of  Vatutin and Mikhailov  with upper bounds on the total variation distance between the binomial and a Poisson-binomial distribution, 
it has been shown by Ehm (see~\cite[Theorem~2]{Ehm} for a more general result)  that,  when  $X\sim\text{Bin}(k,\frac{i}{n})$ is such that  $\text{Var}(X)\ge 1$,  it holds 
\begin{equation}\label{Ehm_bound}
\Prb(H\ge \Exp(H)) \,\ge\, \Prb(X\ge \Exp(X)) - \frac{k-1}{n-1} \, .
\end{equation} 
In particular (see Lemma~\ref{prop_main} below), it holds  $\Prb(H\ge \Exp(H))\ge k/n$, when 
$n\ge 8k$ and $\text{Var}(X)\ge 1$. As a consequence, if $i$ is such that $i(n-i)\ge \frac{n^2}{k}$, then loading $n-1$ agents with no amount and one agent with the whole amount is a better choice for the smuggler than loading $i$ agents with an amount of $1/i$ and $n-i$ agents with no amount.

In this work, we aim at complementing the above results and at providing evidence that there is space for improvements.  
Our first result implies that, when $n\ge 8k$, loading one agent with the whole amount is always a best choice for the smuggler.

\begin{theorem}\label{main}
Let $n,k$ be positive integers such that $n\ge 8k$, and let $i\in [n]$. If 
$H\sim\text{Hyp}(n,i,k)$,  then 
\[
\Prb\left(H \ge \Exp(H)\right) \,\ge\, \frac{k}{n} \, .
\]
\end{theorem}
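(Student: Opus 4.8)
The plan is to clear most of the range of $i$ with the inequalities already assembled and then to handle a thin low-variance window directly. The case $i=n$ is trivial, since then $H=k=\Exp(H)$ almost surely and the probability equals $1$; so I assume $1\le i\le n-1$ and set $X\sim\text{Bin}(k,i/n)$, for which $\text{Var}(X)=k\cdot\frac{i(n-i)}{n^2}=\Exp(H)\bigl(1-\tfrac{i}{n}\bigr)$. Whenever $\text{Var}(X)\ge 1$, that is $i(n-i)\ge n^2/k$, Lemma~\ref{prop_main} yields $\Prb(H\ge\Exp(H))\ge k/n$ directly; internally this is the combination of Ehm's inequality~\eqref{Ehm_bound} with the Greenberg--Mohri bound of Theorem~\ref{Greenberg_Mohri} and the elementary estimate $\tfrac14-\tfrac{k-1}{n-1}\ge k/n$, valid for $n\ge 8k$. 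Everything therefore reduces to the regime $\text{Var}(X)<1$, in which the identity $\text{Var}(X)=\Exp(H)(1-i/n)$ forces $\Exp(H)<2$ when $i\le n/2$, and symmetrically $\Exp(k-H)=k(n-i)/n<2$ when $i>n/2$; in either case only the bottom one or two atoms of the relevant distribution matter.

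I would first dispose of the two mean-at-most-one situations, which are clean. If $i\le n/2$ and $\Exp(H)\le 1$, then $\{H\ge\Exp(H)\}=\{H\ge 1\}$, so it suffices to bound $\Prb(H=0)=\prod_{l=0}^{k-1}\bigl(1-\tfrac{i}{n-l}\bigr)$; this product is decreasing in $i$ and equals exactly $1-k/n$ at $i=1$, whence $\Prb(H=0)\le 1-k/n$ and the claim follows, with equality precisely at the extremal index $i=1$. If $i>n/2$, I pass to the complement $W=k-H\sim\text{Hyp}(n,n-i,k)$, so that $\Prb(H\ge\Exp(H))=\Prb(W\le\Exp(W))$ with $\Exp(W)<2$; when $\Exp(W)\le 1$ this probability is at least $\Prb(W=0)=\prod_{l=0}^{k-1}\bigl(1-\tfrac{n-i}{n-l}\bigr)\ge\bigl(1-\tfrac{8}{7k}\bigr)^{k}$, which stays above $1/8\ge k/n$.

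The genuinely delicate case, which I expect to be the main obstacle, is when the relevant mean lies in $(1,2)$: for $i\le n/2$ one must prove $\Prb(H\ge 2)\ge k/n$, equivalently $\Prb(H\le 1)\le 1-k/n$, and for $i>n/2$ one must prove $\Prb(W\le 1)\ge k/n$. None of the imported tools survives here: Ehm's inequality~\eqref{Ehm_bound} requires $\text{Var}(X)\ge 1$, the Berry--Esseen estimate~\eqref{Berry_Essen_bound} is vacuous once $\text{Var}(H)<1$, comparison with the extremal index $i=1$ is useless because there $\Prb(H\ge 2)=0$, and comparison with $\text{Bin}(k,i/n)$ fails because the threshold $2$ sits essentially at the mean. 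My plan is to exploit the stochastic monotonicity of $\text{Hyp}(n,i,k)$ in $i$: since $\Prb(H\le 1)$ is non-increasing in $i$, the inequality $\Prb(H\le 1)\le 1-k/n$ need only be verified at the smallest $i$ with $\Exp(H)>1$, where $\Exp(H)\le 1+k/n$, and symmetrically the large-$i$ inequality need only be verified at the largest admissible value of $n-i$. At such a boundary index the two bottom probabilities are controlled through the exact ratio $\Prb(H=1)/\Prb(H=0)=\frac{ik}{n-i-k+1}$ together with $\Prb(H=0)\le(1-i/n)^{k}\le e^{-\Exp(H)}$, so that $\Prb(H\le 1)=\Prb(H=0)\bigl(1+\tfrac{ik}{n-i-k+1}\bigr)$ becomes a function of the single quantity $\Exp(H)\in(1,2)$. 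The crux is to make these one- and two-atom estimates tight enough near $\Exp(H)=1$ (respectively near $\Exp(W)=2$), where every crude large-deviation or Gaussian bound degrades even though the true probability keeps a comfortable margin above $k/n$.
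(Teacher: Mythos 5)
Your reduction is sound where it is complete: the case $\mathrm{Var}(X)\ge 1$ is exactly the paper's Lemma~\ref{prop_main}, and your treatment of the two mean-at-most-one windows via the product formula $\Prb(H=0)=\prod_{l=0}^{k-1}\bigl(1-\tfrac{i}{n-l}\bigr)\le 1-\tfrac{k}{n}$ is a clean elementary substitute for the paper's inverse-factorial-moment identity (Lemma~\ref{clean_form} with $m=1$). Two small blemishes there: your bound $\bigl(1-\tfrac{8}{7k}\bigr)^{k}$ is negative for $k=1$, so that case needs a separate (trivial) word, and you should record $k\ge 2$ before invoking it.

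The genuine gap is the case you yourself flag as the crux: $\Exp(H)\in(1,2)$ with $i\le n/2$, and symmetrically $\Exp(k-H)\in(1,2)$ with $i>n/2$. The monotonicity reduction to the boundary index is legitimate, but the two-atom estimate you propose there does not close as stated. At the smallest $i$ with $\mu:=ik/n>1$ you would bound $\Prb(H\le 1)\le(1-i/n)^{k}\bigl(1+\tfrac{ik}{n-i-k+1}\bigr)$; replacing $(1-i/n)^{k}$ by $e^{-\mu}$ and using only $i\le n/2$, $k\le n/8$ gives a multiplier as large as $1+\tfrac{8\mu}{3}$, and $e^{-\mu}\bigl(1+\tfrac{8\mu}{3}\bigr)>1$ near $\mu=1$, i.e.\ weaker than the trivial bound. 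The two constraints must be played against each other (when $i/n$ is close to $1/2$, $k$ is forced to be small and $(1-i/n)^{k}$ is far below $e^{-\mu}$), and that joint case analysis is exactly what is missing; contrary to your claim, the quantity is not a function of $\Exp(H)$ alone, since the ratio $\Prb(H=1)/\Prb(H=0)=\tfrac{ik}{n-i-k+1}$ depends on $i/n$ and $k/n$ separately. The paper takes a different route here: for $i<n/2$ it expresses $\Prb(H\ge 2)$ via Lemma~\ref{clean_form} with $m=2$ and applies Jensen's inequality to $\Exp\bigl(\tfrac{1}{(Z_2+2)(Z_2+1)}\bigr)$, reducing the claim to a quadratic inequality in $B=\tfrac{(i-1)(k-1)}{n-1}$; for $i>n/2$ it lower-bounds the top atoms using Lemma~\ref{mean_less_than_one2} together with the fact that the hypergeometric median is within one of the mean, obtaining $\Prb(H=k-1)\ge\tfrac{7}{16}$ (resp.\ $\Prb(H=k-2)\ge\tfrac{3}{8}$) and then transferring mass via the exact consecutive-atom ratio. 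Until you supply an argument of comparable precision at your boundary indices, the proof is incomplete.
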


For the proof of Theorem~\ref{main} we combine~\eqref{Ehm_bound} with certain properties of the hypergeometric distribution that allow us to handle the case where $\text{Var}(X)<1$.  
Let us remark that the MMS and Aydinian-Blinovski conjectures mentioned above suggest that the assumption $n\ge 8k$ in Theorem~\ref{main} may be replaced by the assumption $n\ge 4k$, but we are unable to prove it. 
Moreover, we obtain the following lower bound. 

\begin{theorem}\label{main2}
Let $n$ be a positive integer and let $i,k\in [n]$. Suppose that  $H\sim\text{Hyp}(n,i,k)$ is such that $\Exp(H)\in [1\,,\,\min\{i,k\}-1]$ and\,  $\frac{(n-i)(n-k)}{n}> 1$. Then \, 
\[
\Prb(H\ge \Exp(H)) \,\ge\,\frac{e^{-1/12}}{4\sqrt{2}} \cdot \sqrt{\frac{n-1}{n}} \cdot\frac{ \sqrt{\text{Var}(H)} }{1 + \sqrt{1+ \frac{n-1}{n-k}\cdot\text{Var}(H)}} \, . 
\]
\end{theorem}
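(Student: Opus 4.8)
The plan is to split the estimate into a lower bound on the mean absolute deviation of $H$ and an upper bound on its tail conditional expectation, linked by an elementary identity. Write $\mu = \Exp(H)$ and $m = \lceil\mu\rceil$, so that $\{H\geq\mu\} = \{H\geq m\}$. Since $\Exp(H-\mu)=0$, the positive and negative parts of $H-\mu$ have equal mean, whence $\Exp[(H-\mu)^+] = \tfrac12\Exp|H-\mu|$; on the other hand $(H-\mu)^+ = (H-\mu)\mathbf{1}_{H\geq\mu}$ gives $\Exp[(H-\mu)^+] = \Exp[H-\mu\mid H\geq\mu]\cdot\Prb(H\geq\mu)$. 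Equating the two yields the exact identity
\[
\Prb(H\geq\mu) \;=\; \frac{\tfrac12\,\Exp|H-\mu|}{\Exp[\,H-\mu\mid H\geq\mu\,]} \,,
\]
so it suffices to bound the numerator below and the denominator above. I expect the stated inequality to arise by pairing the bound $\Exp|H-\mu|\geq \frac{e^{-1/8}}{4\sqrt{2}}\sqrt{\tfrac{n-1}{n}}\sqrt{\text{Var}(H)}$ for the numerator with the bound $\Exp[H-\mu\mid H\geq\mu]\leq \tfrac12\big(1+\sqrt{1+\tfrac{n-1}{n-k}\text{Var}(H)}\big)$ for the denominator; substituting these into the identity reproduces the right-hand side of the theorem exactly.

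For the denominator, the key preliminary observation is that $\tfrac{n-1}{n-k}\text{Var}(H) = k\cdot\tfrac{i}{n}\cdot\tfrac{n-i}{n}$ is precisely the variance $\tau^2$ of $\text{Bin}(k,i/n)$, the binomial with the same mean $\mu$ as $H$; this is where the factor $\tfrac{n-1}{n-k}$ originates. Setting $t = \Exp[H-\mu\mid H\geq m]$, the desired bound $t\leq\tfrac12(1+\sqrt{1+\tau^2})$ is equivalent to $(2t-1)^2\leq 1+\tau^2$, i.e. to $t(t-1)\leq\tfrac14\tau^2$. Since $x\mapsto x(x-1)$ is convex, Jensen's inequality yields $t(t-1)\leq\Exp[(H-\mu)(H-\mu-1)\mid H\geq m]$, so it is enough to prove
\[
\Exp\big[(H-\mu)(H-\mu-1)\,\mathbf{1}_{H\geq m}\big] \;\leq\; \tfrac14\,\tau^2\cdot\Prb(H\geq m) \,.
\]
The left-hand side is the truncation of the factorial-type moment $\Exp[(H-\mu)(H-\mu-1)] = \text{Var}(H)$ to the upper tail. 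Noting that the parabola $(x-\mu)(x-\mu-1)-\tfrac14\tau^2$ has its larger root at $x_{\star} = \mu + \tfrac12(1+\sqrt{1+\tau^2})$, this inequality asserts that the positive mass $H$ places beyond $x_{\star}$ is outweighed by the negative contribution from the band $m\leq H\leq x_{\star}$. I would establish it through a one-sided upper-tail estimate for $H$, exploiting the convex-order relation $H\le_{\mathrm{cx}}\text{Bin}(k,i/n)$ coming from the Vatutin--Mikhailov representation of $H$ as a sum of independent Bernoulli variables, together with the log-concavity of the hypergeometric weights; the hypotheses $\Exp(H)\leq\min\{i,k\}-2$ and $\tfrac{(n-i)(n-k)}{n}>1$ should be exactly what guarantees enough room above the mean for this balancing to hold.

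For the numerator I would prove the mean absolute deviation lower bound directly from the weights. Using the tail-sum representation $\tfrac12\Exp|H-\mu| = \sum_{j\geq m}\Prb(H\geq j) - (\mu-m+1)\Prb(H\geq m)$, the task reduces to showing that $H$ genuinely spreads over a window of width of order $\sqrt{\text{Var}(H)}$ about its mean. The constant $e^{-1/8}=e^{-(1/2)^2/2}$ signals a Gaussian-type local lower bound on the central probabilities $\Prb(H=j)$ for $j$ within one standard deviation of $\mu$, which I would obtain by lower-bounding the ratios $\Prb(H=j+1)/\Prb(H=j) = \tfrac{(i-j)(k-j)}{(j+1)(n-i-k+j+1)}$ of consecutive weights across that window, while the factor $\sqrt{\tfrac{n-1}{n}}$ is the natural correction carried by the variance formula of $H$.

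The main obstacle is the tail conditional expectation bound. A plain Cauchy--Schwarz estimate such as $\Exp[((H-\mu)^+)^2]\leq\text{Var}(H)$ (or $\leq\tau^2$ via convex order) only yields a weaker, rational lower bound of the shape $\Prb(H\geq\mu)\gtrsim\text{Var}(H)/\tau^2$, and cannot produce the square-root denominator $1+\sqrt{1+\tfrac{n-1}{n-k}\text{Var}(H)}$. The sharp form requires the clean inequality $t(t-1)\leq\tfrac14\tau^2$ with no residual dependence on $\Prb(H\geq m)$, and it is precisely this that forces genuine one-sided control of the upper tail of $H$ rather than a second-moment argument alone. Once both one-sided bounds are in place, the conclusion follows by substitution into the displayed identity.
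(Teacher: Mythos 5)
Your overall architecture coincides with the paper's: the identity $\Prb(H\ge\mu)=\frac{\frac12\Exp|H-\mu|}{\Exp[H-\mu\mid H\ge\mu]}$ combined with a lower bound on the mean absolute deviation and an upper bound on the tail conditional expectation. However, the way you split the constants between the two halves creates a genuine gap: the denominator bound you rely on,
\[
\Exp[\,H-\mu\mid H\ge\mu\,]\;\le\;\tfrac12\Bigl(1+\sqrt{1+\tfrac{n-1}{n-k}\,\text{Var}(H)}\Bigr),
\]
is false. Take $H\sim\text{Hyp}(1000,500,100)$, which satisfies every hypothesis of the theorem: $\mu=50$, $\text{Var}(H)\approx 22.5$, and $\tau^2:=\frac{n-1}{n-k}\text{Var}(H)=25$. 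By the complement-pairing symmetry, $\Prb(H\ge 50)=\frac12+\frac12\Prb(H=50)\approx 0.542$, while Ramasubban's closed form gives $\Exp|H-50|=\frac{2\cdot 50}{1000}\cdot 450\cdot\Prb(H=50)\approx 3.78$, so $\Exp[H-50\mid H\ge 50]\approx 1.89/0.542\approx 3.48$, which exceeds $\frac12(1+\sqrt{26})\approx 3.05$. (More generally, for nearly symmetric $H$ with large variance one has $\Exp[H-\mu\mid H\ge\mu]\approx\sqrt{2/\pi}\,\sigma\approx 0.8\,\sigma$, which beats $\tau/2\approx\sigma/2$.) Hence your reduction to $t(t-1)\le\frac14\tau^2$, and the truncated second-factorial-moment inequality you derive from it via Jensen, are aimed at a false target; no amount of convex-order or log-concavity machinery can rescue that step.

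The workable allocation, which is the one the paper uses, pairs the \emph{stronger} numerator bound $\Exp|H-\mu|\ge\frac{e^{-1/8}}{2\sqrt2}\sqrt{\frac{n-1}{n}}\sqrt{\text{Var}(H)}$ (a factor $2$ better than the one you posit) with the \emph{weaker} denominator bound $\Exp[H-\mu\mid H\ge\mu]\le 1+\sqrt{1+\tau^2}$; the two factors of $2$ cancel and yield the same final constant $\frac{e^{-1/8}}{4\sqrt2}$. The paper obtains the denominator bound by showing that $H$ conditioned on $\{H\ge\mu\}$ is dominated in the likelihood-ratio (hence usual stochastic) order by the conditioned binomial $\text{Bin}(k,i/n)$, passing to $\text{Bin}(k,\lceil\mu\rceil/k)$ whose integer mean is a median, and then applying essentially your median argument; the numerator bound is obtained from Ramasubban's formula, a Stirling comparison of $\Prb(H=m)$ with the corresponding binomial point mass (this is where $e^{-1/8}$ and the hypotheses $\mu\le\min\{i,k\}-2$ and $\frac{(n-i)(n-k)}{n}>1$ enter), and the Berend--Kontorovich inequality $\Exp|X-kp|\ge\sqrt{\text{Var}(X)/2}$. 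Both of your ingredient bounds are in any case only sketched, so even apart from the false intermediate claim the proposal does not yet constitute a proof.
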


We prove Theorem~\ref{main2} in Section~\ref{main2_proof}.  
The bound provided by Theorem~\ref{main2} is, of course, weaker than the bound provided by~\eqref{Berry_Essen_bound} but has the advantage that it is obtained using ``first principles", and is applicable on a slightly  wider range of the parameters. 
In particular, Theorem~\ref{main2} is proven by combining a lower bound on the mean absolute deviation (Theorem~\ref{ratio_hyp_bin_exp}) with an upper bound on the tail conditional expectation (Theorem~\ref{TCE_sqrt_bound}) of the hypergeometric distribution, which appear to be new and  may be of independent interest. 

Moreover, Theorem~\ref{main2} occasionally  complements and even improves upon the bound provided in~\eqref{Berry_Essen_bound}. For instance, it follows from~\eqref{Berry_Essen_bound} that  $\Prb(H\ge \Exp(H))\ge 0.01$, when $\text{Var}(H) \ge 1.29$. 
On the other hand, Theorem~\ref{main2} yields the following.

\begin{corollary}\label{cor_bound1}
Let $n\ge 4$ and $H\sim\text{Hyp}(n,i,k)$ be such that $\Exp(H)\in [1, \min\{i,k\}-1]$ and $\frac{(n-i)(n-k)}{n}>1$.  Assume further that $\text{Var}(H)\ge 1$.  Then
$\Prb(H\ge \Exp(H)) \ge 0.049$. 
\end{corollary}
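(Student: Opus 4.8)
The plan is to derive the bound straight from Theorem~\ref{main2}, but only after normalizing the parameters by a symmetry reduction, since the raw bound of Theorem~\ref{main2} is \emph{not} invariant under the symmetries of $\Prb(H\ge \Exp(H))$ and is genuinely weaker than $0.049$ for some admissible triples. First I would record that $\Prb(H\ge \Exp(H))$, the variance $\text{Var}(H)$, and (as I check below) the hypotheses are all invariant under the two involutions $(i,k)\mapsto(k,i)$ and $(i,k)\mapsto(n-i,n-k)$. The first is the standard identity $\text{Hyp}(n,i,k)\sim\text{Hyp}(n,k,i)$. For the second I would use the $2\times 2$ contingency table: if $H$ counts black-and-sampled marbles, the number $G$ of white-and-unsampled marbles equals $n-i-k+H$ and satisfies $G\sim\text{Hyp}(n,n-i,n-k)$ with $\Exp(G)=n-i-k+\Exp(H)$, so $\{H\ge \Exp(H)\}=\{G\ge \Exp(G)\}$ and hence $\Prb(H\ge \Exp(H))=\Prb(G\ge \Exp(G))$. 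Since $\text{Var}(H)$ is symmetric in $i,k$ and under $(i,k)\leftrightarrow(n-i,n-k)$, the assumption $\text{Var}(H)\ge 1$ survives both maps.

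Next I would verify that the two constraints $\Exp(H)\in[1,\min\{i,k\}-2]$ and $\frac{(n-i)(n-k)}{n}>1$ also survive. Assuming $i\le k$, the upper constraint $\frac{ik}{n}\le i-2$ rearranges to $i(n-k)\ge 2n$, which is exactly the upper constraint for the complementary pair $(n-i,n-k)$; meanwhile $\frac{ik}{n}\ge 1$ and $\frac{(n-i)(n-k)}{n}>1$ swap roles. Thus the whole hypothesis set passes to all four representatives (the only delicate point is the boundary $\Exp(H)=1$, where the strict inequality $\frac{ik}{n}>1$ could fail for the complementary pair; there one checks $\min\{i,k,n-i,n-k\}=\min\{i,k\}$, so the minimizing representative is of the original type and no strictness is lost). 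Consequently I may pass to the representative whose sampling parameter equals $\min\{i,k,n-i,n-k\}\le\min\{k,n-k\}\le n/2$, i.e.\ I may assume without loss of generality that $k\le n/2$.

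With $k\le n/2$ in force I apply Theorem~\ref{main2}. Writing $c=\frac{n-1}{n-k}$, the map $V\mapsto \frac{\sqrt V}{1+\sqrt{1+cV}}$ is increasing, since in the variable $u=\sqrt V$ its derivative has numerator $1+(1+cV)^{-1/2}>0$; hence the hypothesis $\text{Var}(H)\ge 1$ lets me replace $\text{Var}(H)$ by $1$, giving the factor $\frac{1}{1+\sqrt{1+c}}$. The reduction $k\le n/2$ yields $c=\frac{n-1}{n-k}\le \frac{2(n-1)}{n}\le 2$, so $\frac{1}{1+\sqrt{1+c}}\ge \frac{1}{1+\sqrt3}$, while $n\ge 4$ gives $\sqrt{\frac{n-1}{n}}\ge\sqrt{3/4}$. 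Combining these,
\[
\Prb(H\ge \Exp(H))\ \ge\ \frac{e^{-1/8}}{4\sqrt2}\cdot\sqrt{\tfrac34}\cdot\frac{1}{1+\sqrt3}\ >\ 0.049 ,
\]
which is the assertion.

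The monotonicity in $V$ and the final numerical evaluation are routine; the real content, and the step I expect to be the main obstacle, is the symmetry reduction carried out in the first two paragraphs. Its necessity is not cosmetic: at $(n,i,k)=(1000,500,995)$ every hypothesis of Theorem~\ref{main2} holds, yet the right-hand side of that theorem is below $0.02$, so one genuinely must establish that $\Prb(H\ge \Exp(H))$ together with each hypothesis is preserved under $(i,k)\mapsto(n-i,n-k)$ and that this permits reducing to $k\le n/2$. I would therefore spend most of the care on the algebraic equivalence of the constraint pair under complementation and on the boundary instances ($\Exp(H)=1$, the upper endpoint $\Exp(H)=\min\{i,k\}-2$, and small $n$ where the admissible region may be empty).
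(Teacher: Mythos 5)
Your proposal is correct and follows essentially the same route as the paper's own proof: reduce to $k\le n/2$ via the complementation symmetry $\Prb(H\ge\Exp(H))=\Prb(H_1\ge\Exp(H_1))$ with $H_1\sim\text{Hyp}(n,n-i,n-k)$, then apply Theorem~\ref{main2} together with the monotonicity of $V\mapsto \sqrt{V}/(1+\sqrt{1+cV})$ and the bounds $\frac{n-1}{n-k}\le 2$, $\sqrt{\frac{n-1}{n}}\ge\sqrt{3/4}$. Your treatment of how the hypotheses transfer under complementation (including the boundary case $\Exp(H)=1$) is somewhat more explicit than the paper's, but the argument is the same.
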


Note that the bound provided by Theorem~\ref{main2} can never be larger than $\frac{e^{-1/12}}{4\sqrt{2}}\approx 0.162$. 
One way to strengthen this bound is by improving upon the constant $\frac{e^{-1/12}}{4\sqrt{2}}$. 
Our numerical experiments suggest that this constant may be replaced by $\frac{1}{2\sqrt{2}}$, but we are unable to provide a proof. Furthermore, our numerical experiments suggest that under the assumptions $\frac{ik}{n}\geq\frac{1}{2}$ and $\frac{(n-i)(n-k)}{n}\geq\frac{1}{2}$  it holds  
$\Prb(H\ge \Exp(H)) \,\ge\, \frac{1}{4}$.

\section{Auxiliary results}
\label{sec:1}

In this section we collect some results regarding  hypergeometric and binomial random variables that will be needed in the proofs of our main results. 

\subsection{Hypergeometric distribution}
\label{hyp_dist1}

We begin with a result that provides a ``refined Markov inequality" for  non-negative integer-valued random variables  whose mean is less than or equal to $1$.

\begin{lemma}
\label{mean_less_than_one}
Suppose that $X$ is a random variable, taking values in the set $\{0,1,2,\ldots\}$, which satisfies $\mathbb{E}(X)\le 1$. Then
\[
\Prb(X=0) \ge \Prb(X\ge 2) \, . 
\]
\end{lemma}
\begin{proof}
Observe that 
\[
\Prb(X=0) + \Prb(X\ge 1) = 1 \ge \Exp(X) \ge \Prb(X\ge 1) + \Prb(X\ge 2) \, .
\] 
\end{proof}

Now we proceed with establishing some properties of the hypergeometric distribution. Our first lemma is a consequence of Lemma~\ref{mean_less_than_one}, and will be needed in the proof of Theorem~\ref{main} to analyze the case where the variance is small. 

\begin{lemma}\label{mean_less_than_one2}
Let $H\sim\text{Hyp}(n,i,k)$ be such that\, $\Exp(H)\le 1$. 
Then 
\[
\mathbb{P}(H=1) \ge \mathbb{P}(H\ge 2) \, .
\]
\end{lemma}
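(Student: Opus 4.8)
The plan is to prove the stronger pointwise fact that, from the value $1$ onwards, the pmf of $H$ decays at least geometrically with ratio $\tfrac12$, and then to sum a geometric series. First I would dispose of degenerate configurations: if $i=0$ or $k=0$ then $H\equiv 0$; if $\min\{i,k\}\le 1$ then $H$ is supported in $\{0,1\}$; and if $i+k>n$ then the support begins at $i+k-n\ge 1$, so $H\ge 1$ almost surely and the hypothesis $\Exp(H)\le 1$ forces $H\equiv 1$. In all these cases $\Prb(H\ge 2)=0$ and the claim is trivial. It therefore remains to handle the case $\min\{i,k\}\ge 2$ and $i+k\le n$, where the support of $H$ is $\{0,1,\dots,\min\{i,k\}\}$ and $n-i-k\ge 0$.

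The key step is to bound the consecutive ratios of the pmf. For $1\le j\le\min\{i,k\}-1$ a direct computation gives
\[
r_j:=\frac{\Prb(H=j+1)}{\Prb(H=j)}=\frac{(i-j)(k-j)}{(j+1)(n-i-k+j+1)} \, .
\]
As $j$ grows the numerator decreases and the denominator increases (both of its factors are positive because $n-i-k\ge 0$), so $r_j$ is decreasing in $j$ and it suffices to bound $r_1$. Here
\[
r_1\le\tfrac12 \iff (i-1)(k-1)\le n-i-k+2 \iff ik\le n+1 \, ,
\]
and the last inequality is implied by the hypothesis $\Exp(H)=\tfrac{ik}{n}\le 1$, i.e. $ik\le n$. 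Hence $r_j\le\tfrac12$ for every $j\ge 1$.

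Iterating $\Prb(H=j+1)\le\tfrac12\Prb(H=j)$ starting from $j=1$ gives $\Prb(H=j)\le 2^{-(j-1)}\Prb(H=1)$ for all $j\ge 1$, and therefore
\[
\Prb(H\ge 2)=\sum_{j\ge 2}\Prb(H=j)\le\Prb(H=1)\sum_{j\ge 2}2^{-(j-1)}=\Prb(H=1) \, ,
\]
as required. The only genuine work is the equivalence reducing $r_1\le\tfrac12$ to $ik\le n$, that is, to the assumption $\Exp(H)\le 1$; the rest is bookkeeping, and I expect the fiddly part to be the careful-but-routine treatment of the degenerate cases and of the edges of the support (making sure the ratios are well defined and the denominators positive). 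Finally, to align with the paper's framing I would observe that this argument is indeed a consequence of Lemma~\ref{mean_less_than_one}: the size-bias identity $j\,\Prb(H=j)=\Exp(H)\,\Prb(H'=j-1)$, with $H'\sim\text{Hyp}(n-1,i-1,k-1)$ satisfying $\Exp(H')=\tfrac{(i-1)(k-1)}{n-1}\le 1$, rewrites the goal as $\Prb(H'=0)\ge\sum_{m\ge 1}\tfrac{1}{m+1}\Prb(H'=m)$; splitting off the $m=1$ term and applying Lemma~\ref{mean_less_than_one} to $H'$ on the tail $\sum_{m\ge 2}$ (together with the trivial ratio bound $\Prb(H'=1)\le\Prb(H'=0)$) then closes the estimate.
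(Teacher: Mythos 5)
Your proof is correct, but it takes a genuinely different route from the paper. You establish the pointwise bound $\Prb(H=j+1)\le\frac12\Prb(H=j)$ for all $j\ge 1$: the consecutive ratio $r_j=\frac{(i-j)(k-j)}{(j+1)(n-i-k+j+1)}$ is decreasing in $j$, and $r_1\le\frac12$ reduces exactly to $ik\le n+1$, which the hypothesis $\Exp(H)\le 1$ supplies; summing the geometric series then finishes. The paper instead rewrites the claim as $\Prb(H=0)+2\Prb(H=1)\ge 1$, fixes $i,k$, and studies this quantity as a function $q(n)$ of $n\ge ik$, arguing it is unimodal with limit $1$ so that only the extremal case $n=ik$ (i.e.\ $\Exp(H)=1$) needs checking, which is then handled via Lemma~\ref{mean_less_than_one}. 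Your argument is more self-contained and arguably cleaner: it avoids the unimodality claim for $q(n)$ (which the paper leaves as ``straightforward to verify''), does not need Lemma~\ref{mean_less_than_one} at all, and yields the stronger conclusion $\Prb(H=j)\le 2^{-(j-1)}\Prb(H=1)$; the paper's reduction to $n=ik$ has the merit of isolating the extremal configuration. Your treatment of the degenerate cases is careful and correct (note that in the paper's conventions $i,k\ge 1$, and when $\min\{i,k\}\ge 2$ the hypothesis $ik\le n$ already forces $i+k\le n$, so some of those cases are vacuous). The closing paragraph, which re-derives the statement from Lemma~\ref{mean_less_than_one} via size-biasing, is only a sketch, and the ``trivial'' bound $\Prb(H'=1)\le\Prb(H'=0)$ there is not free --- it is the same computation $ik\le n+1$ again --- but since your main argument is complete this is immaterial.
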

\begin{proof}
Observe that the desired inequality can be equivalently written as 
\[
\mathbb{P}(H=0) + 2\cdot\mathbb{P}(H=1)\ge 1 \, .
\]
Now, we may write 
\[
\mathbb{P}(H=0) + 2\cdot\mathbb{P}(H=1) \,=\, \frac{\binom{n-i}{k}}{\binom{n}{k}} + \frac{2i\binom{n-i}{k-1}}{\binom{n}{k}} 
\,=\,  \frac{\binom{n-i}{k}}{\binom{n}{k}}\cdot\left(1+ \frac{2ki}{n-i-k+1}\right)\, .
\]
We distinguish two cases. Suppose first that $i>1$. 
Let $i,k$ be fixed, and consider the sequence 
\[
q(n) := \frac{\binom{n-i}{k}}{\binom{n}{k}}\cdot\left(1+ \frac{2ki}{n-i-k+1}\right)\, ,\, \text{ for } \, n \ge ik \, .
\]
It is straightforward to verify  that the sequence $\{q(n)\}_{n\ge ik}$ 
is increasing when $n\le 2ik-i-k$,  
and  decreasing when $n\ge 2ik-i-k$. 
Since in the limit it holds $\lim_{n\to\infty}q(n)=1$, it is enough to show that $q(ik)\ge 1$ or, equivalently, that the result holds true when $H\sim\text{Hyp}(ik,i,k)$; i.e., when $\mathbb{E}(H)=1$. In this case, it holds $n=ik$ and we  compute  
\[
\frac{\mathbb{P}(H=1)}{\mathbb{P}(H=0)} = \frac{ik}{ik-i-k+1} =\frac{ik}{(i-1)(k-1)} > 1 \, .
\]
Now, Lemma~\ref{mean_less_than_one} implies that 
\[
\mathbb{P}(H=0) \ge \mathbb{P}(H\ge 2) \, ,
\]
and the result follows. 
Now suppose that $i=1$. Then 
$\Prb(H=0)+ 2\cdot\Prb(H=1) = \frac{n-k}{n} + 2\cdot \frac{k}{n} \ge 1$, 
as desired. 
\end{proof}

The following result allows to express the tail of a hypergeometric random variable in terms of  an ``inverse factorial moment". 

\begin{lemma}\label{clean_form}
Let $H\sim\text{Hyp}(n,i,k)$, 
and fix an integer $m\in \{0,1,\ldots,\min\{i,k\}\}$. 
Then 
\begin{equation}\label{clean_form1}
\mathbb{P}(H \ge m) = \frac{\binom{k}{m}\cdot \binom{i}{m}}{\binom{n}{m}} \cdot \mathbb{E}\left(\frac{1}{\binom{Z_m+m}{m}} \right) \, ,
\end{equation}
where $Z_m\sim\text{Hyp}(n-m,i-m,k-m)$. 
\end{lemma}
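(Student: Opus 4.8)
The plan is to prove the identity by expanding both sides and matching them term by term, so that everything collapses onto a single elementary binomial identity. First I would write the left-hand side as $\Prb(H\ge m) = \sum_{j\ge m}\frac{\binom{i}{j}\binom{n-i}{k-j}}{\binom{n}{k}}$, the sum running over all admissible $j$ (the binomial coefficients vanish automatically outside the support, so I need not fuss over the exact endpoints).

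On the right-hand side I would spell out the law of $Z_m\sim\text{Hyp}(n-m,i-m,k-m)$, namely $\Prb(Z_m=\ell)=\frac{\binom{i-m}{\ell}\binom{n-i}{k-m-\ell}}{\binom{n-m}{k-m}}$, and then perform the index shift $j=\ell+m$, so that the argument $Z_m+m$ becomes $j$ and the weight $\frac{1}{\binom{Z_m+m}{m}}$ becomes $\frac{1}{\binom{j}{m}}$. Expanding the expectation term by term, the whole right-hand side becomes $\frac{\binom{k}{m}\binom{i}{m}}{\binom{n}{m}}\sum_{j\ge m}\frac{1}{\binom{j}{m}}\cdot\frac{\binom{i-m}{j-m}\binom{n-i}{k-j}}{\binom{n-m}{k-m}}$. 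Since the factor $\binom{n-i}{k-j}$ occurs in every term on both sides, it then suffices to verify, for each fixed $j\ge m$, the scalar identity $\frac{\binom{k}{m}\binom{i}{m}}{\binom{n}{m}}\cdot\frac{1}{\binom{j}{m}}\cdot\frac{\binom{i-m}{j-m}}{\binom{n-m}{k-m}} = \frac{\binom{i}{j}}{\binom{n}{k}}$.

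This last identity I would reduce to the ``subset-of-a-subset'' rule $\binom{a}{b}\binom{a-b}{c-b}=\binom{a}{c}\binom{c}{b}$, applied twice: with $(a,b,c)=(i,m,j)$ it gives $\binom{i}{m}\binom{i-m}{j-m}=\binom{i}{j}\binom{j}{m}$, which clears the $\binom{i}{\cdot}$ and $\binom{j}{m}$ factors, while with $(a,b,c)=(n,m,k)$ it gives $\binom{n}{m}\binom{n-m}{k-m}=\binom{n}{k}\binom{k}{m}$, which clears the remaining ones. I do not anticipate a genuine obstacle: the proof is pure bookkeeping, and the only points that require care are the shift $\ell\mapsto\ell+m$ and the two applications of the elementary identity.

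Finally, I would record the probabilistic meaning behind the computation, which makes the result transparent and gives a cleaner alternative route. The displayed weights show that $Z_m+m$ carries the law of $H$ size-biased by $\binom{H}{m}$ (equivalently, by the number of $m$-subsets of black marbles contained in the sample), with normalizing constant the factorial moment $\Exp\binom{H}{m}=\frac{\binom{i}{m}\binom{k}{m}}{\binom{n}{m}}$; consequently $\Exp\bigl[1/\binom{Z_m+m}{m}\bigr]$ exactly ``un-tilts'' this biasing and returns the tail mass $\Prb(H\ge m)$. One could therefore derive the lemma directly from this size-biasing observation together with the factorial-moment identity, bypassing the term-by-term expansion altogether.
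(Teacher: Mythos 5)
Your proof is correct and follows essentially the same route as the paper's: both expand the tail term by term and verify the pointwise identity $\mathbb{P}(H=j) = \frac{\binom{k}{m}\binom{i}{m}}{\binom{n}{m}\binom{j}{m}}\,\mathbb{P}(Z_m=j-m)$, the paper by cancelling falling factorials and you by two applications of the subset-of-a-subset identity, which is the same algebra in different clothing. Your closing size-biasing interpretation (that $Z_m+m$ is $H$ tilted by $\binom{H}{m}$ with normalizer $\Exp\binom{H}{m}=\frac{\binom{i}{m}\binom{k}{m}}{\binom{n}{m}}$) is a pleasant conceptual gloss, but it is not needed and does not change the substance of the argument.
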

\begin{proof}
Observe that 
$Z_m\in\{0,1,\ldots,\min\{i,k\}-m\}$ and that  for $j \in \{m,m+1,\ldots, \min\{i,k\}\}$ we may write  
\begin{eqnarray*}
\mathbb{P}(H = j) &=& \frac{\binom{i}{j}\binom{n-i}{k-j}}{\binom{n}{k}} 
\,=\, \frac{\frac{i}{j}\cdot\frac{i-1}{j-1}\cdots\frac{i-m+1}{j-m+1} \cdot \binom{i-m}{j-m}\binom{n-i}{k-j}}{\frac{n}{k}\cdot \frac{n-1}{k-1}\cdots \frac{n-m+1}{k-m+1}\cdot\binom{n-m}{k-m}} \\
&=& \frac{\binom{k}{m}}{\binom{n}{m}} \cdot\frac{\binom{i}{m}}{\binom{j}{m}} \cdot \mathbb{P}(Z_m = j-m) \, .
\end{eqnarray*}
Hence, it holds that   
\begin{eqnarray*}
\mathbb{P}(H \ge m) &=& \sum_{j=m}^{\min\{i,k\}} \mathbb{P}(H = j) 
\,=\, \sum_{j=m}^{\min\{i,k\}} \frac{\binom{k}{m}}{\binom{n}{m}} \cdot\frac{\binom{i}{m}}{\binom{j}{m}} \cdot \mathbb{P}(Z_m = j-m) \\
&=& \frac{\binom{k}{m} \cdot \binom{i}{m}}{\binom{n}{m}}  \cdot \sum_{j=0}^{\min\{i,k\}-m} \frac{1}{\binom{j+m}{m}} \cdot \mathbb{P}(Z_m = j) 
\,=\, \frac{\binom{k}{m} \cdot \binom{i}{m}}{\binom{n}{m}} \cdot \mathbb{E}\left(\frac{1}{\binom{Z_m+m}{m}}\right) \, ,
\end{eqnarray*}
as desired. 
\end{proof}

\subsection{Mean absolute deviation}
\label{hyp_dist2}

We now proceed with a lower bound on the mean absolute deviation of a hypergeometric random variable. We begin with a result that compares the probabilities of a hypergeometric and a binomial random variable taking the value that is right above their mean.  Here and later, given a real number $x$, we let $\lceil x \rceil=\min\{s\in\mathbb{Z}\,:\, s\ge x\}$. 
The proof employs Robbins' version of Stirling's formula (see~\cite{Robbins}), which states that for every positive integer $n$ it holds 
\begin{equation}\label{robbins_form}
\sqrt{2\pi n} \left(\frac{n}{e}\right)^n e^{\frac{1}{12n+1}} < n! < \sqrt{2\pi n} \left(\frac{n}{e}\right)^n e^{\frac{1}{12n}} \, .
\end{equation}

\begin{lemma}\label{ratio_hyp_bin}
Let $n$ be a positive integer, and let $i,k\in [n]$. 
Let also $H\sim\text{Hyp}(n,i,k)$\,,  $X\sim\text{Bin}(k,\frac{i}{n})$, and 
assume further that the following hold true:
\begin{enumerate} 
\item \, $\frac{ik}{n}\in [1, \min\{i,k\}-2]$. 
\item \, $\frac{(n-i)(n-k)}{n}> 1$. 
\end{enumerate}
If $m=\lceil \frac{ik}{n}\rceil$, then  
\[
\frac{\Prb(H=m)}{\Prb(X=m)} \,\ge\, \frac{e^{-1/12}}{2} \cdot \sqrt{\frac{i(n-i)(n-k)}{(i-m)(n-i-k+m)n}} \, . 
\]
\end{lemma}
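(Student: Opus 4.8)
The plan is to write both point masses explicitly, cancel the common factors, and apply Robbins' formula~\eqref{robbins_form} to what remains. Since $\Prb(H=m)=\binom{i}{m}\binom{n-i}{k-m}/\binom{n}{k}$ and $\Prb(X=m)=\binom{k}{m}(i/n)^m((n-i)/n)^{k-m}$, the coefficient $\binom km$ and the factorials $m!,(k-m)!,k!$ all cancel, leaving
\[
\frac{\Prb(H=m)}{\Prb(X=m)}=\frac{i!\,(n-i)!\,(n-k)!}{(i-m)!\,(n-i-k+m)!\,n!}\cdot\frac{n^k}{i^m(n-i)^{k-m}}.
\]
First I would apply~\eqref{robbins_form} to each of the six factorials. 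The three factors $\sqrt{2\pi}$ upstairs cancel the three downstairs; the arguments of the numerator factorials sum to $2n-k$, as do those of the denominator, so the powers of $e$ cancel as well; and the surviving square roots of the arguments assemble into exactly $S:=\sqrt{\tfrac{i(n-i)(n-k)}{(i-m)(n-i-k+m)\,n}}$. Writing the ratio as $S\cdot P\cdot E$, where $E$ collects the Robbins error factors $e^{\theta_x}$ with $\tfrac1{12x+1}<\theta_x<\tfrac1{12x}$ and
\[
P=\left(\frac{i}{i-m}\right)^{i-m}\left(\frac{n-i}{n-i-k+m}\right)^{n-i-k+m}\left(\frac{n-k}{n}\right)^{n-k},
\]
reduces the lemma to establishing $P\cdot E\ge e^{-1/8}/2$.

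The key observation driving the bound on $P$ is that its logarithm, regarded as a function $F(m)=(i-m)\ln\tfrac{i}{i-m}+(n-i-k+m)\ln\tfrac{n-i}{n-i-k+m}+(n-k)\ln\tfrac{n-k}{n}$ of a real variable $m$, satisfies $F(ik/n)=0$ and $F'(ik/n)=0$; that is, $P$ equals $1$ \emph{exactly} when $m$ equals the mean $ik/n$. Differentiating gives $F''(m)=-\bigl(\tfrac{1}{i-m}+\tfrac{1}{n-i-k+m}\bigr)<0$, so $F$ is concave, and a second-order Taylor expansion about the mean yields $\ln P=\tfrac12 F''(\xi)\,\delta^2$ for some $\xi$ between $ik/n$ and $m$, where $\delta:=m-ik/n$. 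Hypothesis~(1) forces $\delta\in[0,1)$, so the deviation of $P$ from $1$ is governed by a single bounded quadratic term.

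It then remains to estimate $\tfrac12 F''(\xi)\delta^2$ and $E$ through the hypotheses. Here I would use $m\le\min\{i,k\}-2$ to get $i-\xi\ge i-m\ge 2$, and $\xi\ge ik/n$ together with Hypothesis~(2) to get $n-i-k+\xi\ge (n-i)(n-k)/n>1$, integrality forcing $n-i-k+m\ge 2$; the identity $(i-\xi)+(n-i-k+\xi)=n-k$ is convenient for controlling $F''(\xi)=-\tfrac{n-k}{(i-\xi)(n-i-k+\xi)}$. For $E$ one bounds the three negative contributions by $\tfrac{1}{12(i-m)}+\tfrac{1}{12(n-i-k+m)}+\tfrac1{12n}\le\tfrac1{24}+\tfrac1{24}+\tfrac1{12n}$ while retaining the positive terms $\theta_i,\theta_{n-i},\theta_{n-k}\ge 0$.

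The main obstacle I anticipate is that the constant $e^{-1/8}/2$ is essentially the most these estimates can deliver: the naive endpoint bounds $i-\xi\ge2$, $n-i-k+\xi>1$, $\delta<1$ give only $\ln P>-\tfrac34$ and $\ln E>-\tfrac1{12}-\tfrac1{12n}$, which fall just short of the required $-\tfrac18-\ln2$. Closing this small gap is the delicate heart of the argument and will require squeezing the estimates: exploiting the identity $(i-\xi)+(n-i-k+\xi)=n-k$ to see that the two reciprocals in $F''(\xi)$ cannot both be large, keeping the positive Robbins terms (which are non-negligible precisely when $n-k$ is small, i.e.\ when $(n-i)(n-k)/n$ is near $1$), and, if needed, replacing the single mean-value point by the exact remainder $F(m)=-\int_{ik/n}^{m}\!\int_{ik/n}^{t}F''(s)\,ds\,dt$.
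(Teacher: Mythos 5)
Your reduction is identical to the paper's: the same cancellation produces the factorization into the square root $S$, the exponential main term $P$ (the paper's $C_2$) and the Robbins error factor $E$ (the paper's $e^{C_1}$), and your handling of $E$ is valid --- in fact slightly sharper than the paper's, which only uses $i-m\ge 1$ and $n-i-k+m\ge 2$ to get $C_1\ge -1/8$. The genuine gap is exactly where you flag it, and it cannot be closed by the tools you propose: the whole content of the lemma is the estimate $P\ge \tfrac12$, and a second-order Taylor bound about the mean provably cannot deliver it. In the regime $B:=\tfrac{(n-i)(n-k)}{n}\to 1^+$, $i-m=2$, $\delta\to 1^-$, your mean-value bound degenerates to $\ln P\ge -\tfrac12\bigl(\tfrac12+\tfrac1B\bigr)\delta^2\to -\tfrac34$, and $e^{-3/4}\approx 0.472<\tfrac12$; the slack left over from your $E$-estimate is at most $\tfrac18-\tfrac1{12}=\tfrac1{24}\approx 0.042$ in the exponent, which does not cover the deficit $\tfrac34-\ln 2\approx 0.057$. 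The exact integral remainder you offer as a fallback is not an additional tool either: carrying out the two integrations simply reproduces the identity $\ln P=\ln\bigl[(1+\tfrac{\delta}{A-\delta})^{A-\delta}\bigr]+\ln\bigl[(1-\tfrac{\delta}{B+\delta})^{B+\delta}\bigr]$ with $A=\tfrac{i(n-k)}{n}$, i.e.\ it returns you to the quantity you were trying to bound.

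What closes the argument --- and what the paper does --- is to treat the two factors of $P$ asymmetrically instead of absorbing them into a single curvature term. The function $f(x)=\bigl(1+\tfrac{x}{A-x}\bigr)^{A-x}\bigl(1-\tfrac{x}{B+x}\bigr)^{B+x}$ is decreasing on $[0,1]$ because its logarithmic derivative equals $\ln\tfrac{B(A-x)}{A(B+x)}\le 0$ (this is precisely your observation that $F'\le 0$ to the right of the mean), so $P=f(\delta)\ge f(1)=\bigl(1+\tfrac{1}{A-1}\bigr)^{A-1}\bigl(1-\tfrac{1}{B+1}\bigr)^{B+1}$; then $A\ge 2$, $B+1\ge 2$ and the monotonicity of the sequences $\bigl(1+\tfrac1r\bigr)^r$ and $\bigl(1-\tfrac1r\bigr)^r$ give $f(1)\ge 2\cdot\tfrac14=\tfrac12$. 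The point your plan misses is that the first factor is a \emph{gain} of at least $\ln 2$, not a loss, and this gain is exactly what offsets the second factor's worst case of $\ln\tfrac14$; a one-point mean-value estimate of $F''$ averages the gain away. Replace the Taylor step by this monotonicity-in-$\delta$ argument plus the two separate endpoint bounds and the rest of your write-up goes through.
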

\begin{proof}
To simplify notation,  set $s:= n-i-k+m$. 
Note that the assumption $\frac{(n-i)(n-k)}{n}>  1$ implies that $i,k < n-1$. Also, 
note that $s$ is an integer such that 
$s \ge n-i-k+\frac{ik}{n} = \frac{(n-i)(n-k)}{n}$, 
and  $s = \left\lceil \frac{(n-i)(n-k)}{n} \right\rceil$.  
Hence, the assumption $\frac{(n-i)(n-k)}{n}> 1$ implies that $s\ge 2$. 
Now we may write  
\[
\frac{\Prb(H=m)}{\Prb(X=m)} = \frac{\binom{i}{m}\binom{n-i}{k-m} n^k}{\binom{n}{k}\binom{k}{m} i^m(n-i)^{k-m}} = 
\frac{i! \cdot (n-i)! \cdot (n-k)! \cdot n^k}{(i-m)!\cdot s! \cdot n! \cdot i^m \cdot (n-i)^{k-m}} \, .
\]
Using~\eqref{robbins_form}, we estimate 
\[
\frac{\Prb(H=m)}{\Prb(X=m)} \ge e^{C_1} \cdot \sqrt{\frac{i(n-i)(n-k)}{(i-m)s n}} \cdot C_2 \, ,
\]
where 
\[
C_1 = \frac{1}{12i+1} + \frac{1}{12(n-i)+1}+\frac{1}{12(n-k)+1} - \frac{1}{12(i-m)} - \frac{1}{12s}-\frac{1}{12n} \, ,
\]
and 
\[
C_2 = \frac{i^i\cdot (n-i)^{n-i}\cdot (n-k)^{n-k}\cdot  n^k}{(i-m)^{i-m}\cdot s^{s}\cdot n^n\cdot i^m\cdot (n-i)^{k-m}}\\
= \left(\frac{i(n-k)}{(i-m)n} \right)^{i-m} \cdot \left( \frac{(n-k)(n-i)}{s n}\right)^{s} \, . 
\]
We  proceed with estimating $C_1$ and $C_2$ from below. 
Since $\frac{1}{12i+1} \ge \frac{1}{12n}$, as well as  $s \ge 2$ and $i-m\ge 2$, it follows that  
$C_1\ge - \frac{1}{12(i-m)} - \frac{1}{12s}\ge -\frac{1}{24}-\frac{1}{24} = -\frac{1}{12}$; hence $e^{C_1} \ge e^{-1/12}$. 

We now show  that $C_2\ge 1/2$. 
Note that if $\frac{ik}{n}$ is an integer then    $\frac{ik}{n}=m$ 
as well as $s = n-i-k-\frac{ik}{n}$, and thus it holds  $C_2=1$; hence we may  assume that $m-1<\frac{ik}{n}<m$. Denote $\delta= m-\frac{ik}{n}$. Then $\delta\in (0,1)$, and we may write  
\[
C_2 \,=\, \left(1+\frac{\delta}{\frac{i(n-k)}{n}-\delta}\right)^{\frac{i(n-k)}{n}-\delta}\cdot\left(1-\frac{\delta}{\frac{(n-i)(n-k)}{n}+\delta}\right)^{\frac{(n-i)(n-k)}{n}+\delta} \, .
\]
To simplify notation,  set $A:=\frac{i(n-k)}{n}$, \, $B=\frac{(n-i)(n-k)}{n}$ and note that $B=n-k-A$. Also, note that the assumption $\frac{ik}{n}\le i-2$ implies that $A\ge 2$. 
Now consider the function  
\[
f(x)=\left(1+\frac{x}{A-x}\right)^{A-x}\cdot\left(1-\frac{x}{B+x}\right)^{B+x}\, , \, \text{ for } \, x\in [0,1] \, .
\]
Note that it holds $f(\delta)=C_2$. 
Moreover, straightforward calculations show that   
$f'(x)=\left(1+\frac{x}{A-x}\right)^{A-x}\cdot\left(1-\frac{x}{B+x}\right)^{B+x}\cdot \ln\left(\frac{B(A-x)}{A(B+x)}\right)$. 
Since $A\ge 2$, it follows that $A-x>0$ for $x\in [0,1]$. Moreover, it holds $\frac{B(A-x)}{A(B+x)} \le 1$, and thus $f'(x)\le 0$. 
This implies that $f$ is decreasing in $[0,1]$; hence we have that  
\[
C_2 \,=\, f(\delta) \,\ge\, f(1) \,=\, \left(1 +\frac{1}{A-1}\right)^{A-1}\cdot\left(1 - \frac{1}{n-k-A+1}\right)^{n-k-A+1} \, . 
\]
Since $A\ge 2$,\, $n-k-A +1 = \frac{(n-i)(n-k)}{n}+1\ge 2$, and the sequences $\{\left( 1+\frac{1}{r}\right)^r\}_{r\ge 1}$ and $\{\left(1-\frac{1}{r}\right)^r\}_{r\ge 1}$ are increasing,  
we conclude that $f(1) \ge 2\cdot \frac{1}{4} =\frac{1}{2}$. The result follows. 
\end{proof}

Our lower bound on the mean absolute deviation of the hypergeometric distribution combines the previous lemma with the corresponding bound for the binomial distribution. 

\begin{theorem}[Berend, Kontorovich~\cite{Berend_Kontorovich}]
\label{BK_Lemma}
Let $k\ge 2$ be a positive integer, and let $X\sim \text{Bin}(k,p)$. If $p\in [\frac{1}{k}, 1- \frac{1}{k}]$, then 
\[
\Exp(|X-kp|) \ge \frac{1}{\sqrt{2}}\cdot\sqrt{\text{Var}(X)} \, .
\]
\end{theorem}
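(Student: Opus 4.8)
The plan is to reduce the statement to a pointwise lower bound on a binomial probability mass function near its mean, via the classical exact formula of De Moivre for the mean absolute deviation of a binomial. Set $q=1-p$, and note that the symmetry $X\mapsto k-X$ (equivalently $p\mapsto q$) preserves both $\Exp|X-kp|$ and $\text{Var}(X)=kpq$, so I may assume $p\le 1/2$; the hypothesis $p\in[\frac1k,1-\frac1k]$ then guarantees $kp\ge 1$ and $kq\ge 1$. The key algebraic fact is the telescoping identity
\[
(j-kp)\binom{k}{j}p^jq^{k-j} = kpq\,[\,f_{j-1}-f_j\,], \qquad f_\ell := \binom{k-1}{\ell}p^\ell q^{k-1-\ell},
\]
which follows from $j\binom{k}{j}=k\binom{k-1}{j-1}$ together with Pascal's rule, where $f_\ell$ is the mass function of $\text{Bin}(k-1,p)$. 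Writing $\Exp|X-kp|=2\sum_{j\ge kp}(j-kp)\binom{k}{j}p^jq^{k-j}$ (the two one-sided deviations coincide since $\Exp(X-kp)=0$) and summing this identity from $m:=\lceil kp\rceil$ up to $k$ collapses the sum to
\[
\Exp|X-kp| = 2kpq\cdot\Prb(Y=m-1), \qquad Y\sim\text{Bin}(k-1,p).
\]

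Dividing by $\sqrt{\text{Var}(X)}=\sqrt{kpq}$, the theorem is equivalent to the pointwise lower bound
\[
\Prb(Y=m-1) \ge \frac{1}{2\sqrt{2}}\cdot\frac{1}{\sqrt{kpq}}.
\]
The crucial structural remark is that $m-1=\lceil kp\rceil-1$ lies within distance $1$ of the mean $(k-1)p$ of $Y$, so $\Prb(Y=m-1)$ is essentially the \emph{modal} value of $Y$. Since the modal mass of a binomial with standard deviation $\sigma$ is asymptotically $\tfrac{1}{\sqrt{2\pi}\,\sigma}$, and $\tfrac{1}{\sqrt{2\pi}}\approx 0.399>\tfrac{1}{2\sqrt2}\approx 0.354$, one expects the inequality to hold with a modest amount of slack; the work is in making this uniform and rigorous.

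To that end I would estimate $\Prb(Y=m-1)=\binom{k-1}{m-1}p^{m-1}q^{k-m}$ with Robbins' form of Stirling's formula~\eqref{robbins_form}, exactly as in the proof of Lemma~\ref{ratio_hyp_bin}. Writing $N=k-1$, $r=m-1$ and $\hat p=r/N$, the factorials produce the Gaussian prefactor $\big(2\pi N\hat p(1-\hat p)\big)^{-1/2}$ multiplied by the relative-entropy correction $\exp(-N\,D(\hat p\,\|\,p))$, where $D(\hat p\,\|\,p)=\hat p\ln\frac{\hat p}{p}+(1-\hat p)\ln\frac{1-\hat p}{q}$. Because $|r-Np|<1$, a second-order Taylor expansion (using $D(p\,\|\,p)=D'(p)=0$ and $D''=\frac{1}{\hat p(1-\hat p)}$) gives $N\,D(\hat p\,\|\,p)=\frac{(r-Np)^2}{2\,p^*(1-p^*)\,N}<\frac{1}{2Np^*(1-p^*)}$ for some $p^*$ between $\hat p$ and $p$, so the correction is governed by $Npq=(k-1)pq\ge (k-1)^2/k^2$, which is bounded away from $0$.

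The main obstacle is uniformity at the extremes, where the inequality is essentially tight—equality holds at $k=2$, $p=1/2$—and the Gaussian heuristic is unreliable. Two regimes demand separate, direct treatment: (i) small $k$, handled by finite verification from the exact De Moivre expression; and (ii) the boundary $p=1/k$ (and symmetrically $p=1-1/k$), where $m=1$, $r=0$, Stirling cannot be applied to $r!$, and the claim reduces to checking $(1-p)^{k-1}\ge \frac{1}{2\sqrt{2kpq}}$ with $kp=1$ directly. For the bulk, where $r\ge 1$ and $N-r\ge 1$, one combines the Stirling estimate with the entropy bound above, absorbing both the relative-entropy factor and the Robbins error terms into the gap between $\tfrac{1}{\sqrt{2\pi}}$ and $\tfrac{1}{2\sqrt2}$. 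The delicate point throughout is confirming that the clean constant $\tfrac{1}{\sqrt2}$ survives in \emph{all} of these parts simultaneously, rather than degrading to something slightly smaller near the boundary of the admissible range of $p$.
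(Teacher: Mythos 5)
A preliminary remark: the paper does not prove Theorem~\ref{BK_Lemma} at all --- it is imported verbatim from Berend and Kontorovich~\cite{Berend_Kontorovich} and used as a black box inside Theorem~\ref{ratio_hyp_bin_exp} --- so there is no in-paper proof to compare against, and I can only judge your proposal on its own terms. Its first half is correct and is indeed the standard opening move. The telescoping identity is verified easily from $j\binom{k}{j}=k\binom{k-1}{j-1}$ and Pascal's rule, and it yields $\Exp|X-kp|=2kpq\,\Prb(Y=m-1)$ with $Y\sim\text{Bin}(k-1,p)$ and $m=\lceil kp\rceil$; this is exactly the Diaconis--Zabell formula~\cite{Diaconis_Zabell} that the paper itself invokes in the proof of Theorem~\ref{ratio_hyp_bin_exp}, rewritten one level down. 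The reduction of the theorem to the pointwise bound $\Prb(Y=m-1)\ge \tfrac{1}{2\sqrt{2}}(kpq)^{-1/2}$ is therefore a genuine, complete step, and your boundary analysis at $p=1/k$ (where $m=1$ and the claim becomes $(1-1/k)^{k-1/2}\ge 2^{-3/2}$, with equality at $k=2$) is also sound.

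The gap is in the second half: the assertion that the margin between $\tfrac{1}{\sqrt{2\pi}}\approx 0.399$ and $\tfrac{1}{2\sqrt{2}}\approx 0.354$ absorbs the relative-entropy correction and the Robbins error terms is not substantiated, and as stated it is false. When $kp$ is an integer one has $r-Np=p-1=-q$ (here $N=k-1$, $r=m-1$), so the entropy term is roughly $\tfrac{q}{2(k-1)p}$, which for $kp=2$ approaches $\tfrac14$; then $e^{-1/4}/\sqrt{2\pi}\approx 0.31<\tfrac{1}{2\sqrt{2}}$, and the inequality is only rescued by the variance-ratio prefactor $\sqrt{kpq\cdot N/(r(N-r))}=\sqrt{(kp-p)/(kp-1)}$, which your sketch does not track. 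More generally the true slack in the target inequality is tiny --- about $7\%$ at $(k,p)=(6,\tfrac13)$, about $2\%$ at $(k,p)=(3,\tfrac13)$, and exactly $0$ at $(k,p)=(2,\tfrac12)$ --- so every error term must be controlled with near-optimal constants and the ``finite verification for small $k$'' region is neither delimited nor carried out. In short, you have correctly reduced the theorem to a uniform pointwise lower bound on a near-modal binomial probability, but that bound \emph{is} the content of the cited theorem, and your write-up leaves it as a plan with an error budget that does not visibly close.
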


\begin{theorem}\label{ratio_hyp_bin_exp}
Let $n\ge 3$ be a positive integer, and fix $i,k\in [n-1]$.
Let $H\sim\text{Hyp}(n,i,k)$ be such that 
$\Exp(H) \in [1, \min\{i,k\} -1]$ and $\frac{(n-i)(n-k)}{n}> 1$. 
Then \, 
\[
\Exp(|H-ik/n|) \ge \frac{e^{-1/12}}{2\sqrt{2}}  \cdot \sqrt{\frac{n-1 }{n}} \cdot \sqrt{\text{Var}(H)} \, .
\]
\end{theorem}
\begin{proof}
Observe that the assumptions imply that $i,k,n-i,n-k\ge 2$. 
We first suppose that $\Exp(H) \in [1, \min\{i,k\} -2]$, and we  distinguish two cases. 
Assume first that $\frac{ik}{n}$ is not an integer. 
Then we have    
$\frac{ik}{n}\in (m-1, m)$, for some  integer 
$m\in \{2,\ldots, \min\{i,k\}-2\}$, and $m=\lceil \frac{ik}{n}\rceil$. 
Let $X\sim\text{Bin}(k,\frac{i}{n})$ be a binomial random variable having the same mean as $H$, and note that $\frac{1}{k}\le \frac{i}{n}\le 1 - \frac{1}{k}$; in particular,  Theorem~\ref{BK_Lemma} applies to $X$.   
We will need the following expressions for the mean absolute deviations of $H$ and $X$. 
It is known (see~\cite[Formula~(3.3), p.554]{Ramasubban}) that the mean absolute deviation of $H$ can be expressed as 
\begin{equation}\label{ramasuban_formula}
\mathcal{M}_H :=\Exp(|H-ik/n|) = \frac{2m}{n}\cdot (n-i-k+m)\cdot\Prb(H=m) \, .
\end{equation}
Moreover, it is known (see~\cite[Formula~(1.1)]{Diaconis_Zabell}) that the mean absolute deviation of $X$ can be expressed as  
\[
\mathcal{M}_X :=\Exp(|X-ik/n|) = 2m\cdot\frac{n-i}{n}\cdot \Prb(X=m) \, .
\]
Hence, Lemma~\ref{ratio_hyp_bin} implies that  
$\frac{\mathcal{M}_H}{\mathcal{M}_X} \ge  \frac{e^{-1/12}}{2} \cdot \frac{n-i-k+m}{n-i} \cdot \sqrt{\frac{i(n-i)(n-k)}{(i-m)(n-i-k+m)n}}$ 
and therefore, using Theorem~\ref{BK_Lemma}, we estimate  
\begin{eqnarray*}
\mathcal{M}_H &\ge&  \frac{e^{-1/12}}{2} \cdot \frac{n-i-k+m}{n-i} \cdot \sqrt{\frac{i(n-i)(n-k)}{(i-m)(n-i-k+m)n}}\cdot \mathcal{M}_X \\
&\ge&  \frac{e^{-1/12}}{2\sqrt{2}} \cdot \frac{\sqrt{n-i-k+m}}{n-i} \cdot \sqrt{\frac{i(n-i)(n-k)}{(i-m)n}}\cdot \sqrt{\frac{ki(n-i)}{n^2}} \\
&\ge&  \frac{e^{-1/12}}{2\sqrt{2}} \cdot \sqrt{\text{Var}(H)} \cdot \sqrt{\frac{(n-k) i (n-1)}{n^2(i-m)}} \, ,
\end{eqnarray*}
where in the last inequality we have used the fact that $n-i-k+m \ge n-i-k+\frac{ik}{n} = \frac{(n-i)(n-k)}{n}$. 
The result follows upon observing that the assumption $\frac{ik}{n}\le m$ implies that $\frac{(n-k)i}{n(i-m)}\ge 1$. 

Now suppose that $\frac{ik}{n}$ is an integer. In this case the expression for the mean absolute deviation for the hypergeometric distribution is  the same as above, but the corresponding expression for the binomial distribution 
(see~\cite[Formula~(1.1)]{Diaconis_Zabell}) becomes    
$\Exp(|X-ik/n|) = 2(m+1)\cdot\frac{n-i}{n}\cdot \Prb(X=m+1)$. 
Since $m\cdot \Prb(X=m)= (m+1)\cdot \Prb(X=m+1)$, when $m=\frac{ik}{n}$, the result follows as in the previous case. 

We now consider the case $\Exp(H)\in (\min\{i,k\}-2,\min\{i,k\}-1]$. Suppose first that $\min\{i,k\}=i$, thus $\frac{ik}{n}\in (i-2,i-1]$. Now set $Y= i-H$, and notice that $Y\sim\text{Hyp}(n,i,n-k)$,  $\Exp(Y)=\frac{i(n-k)}{n}\in [1,2)$ and  
$\Exp(|H-\frac{ik}{n}|) = \Exp(|Y-\frac{i(n-k)}{n}|)$. 
Moreover, note that $\frac{(n-i)k}{n}\ge \frac{i(n-k)}{n} \ge 1$, since $i\le k$. If $\frac{(n-i)k}{n}=1$, then we also have $\frac{i(n-k)}{n}=1$ and $i=k$, and so it holds $\frac{i(n-i)}{n}=1$. The only solution to $i(n-i)=n$ is  $n=4,i=2$, an instance for which it is easy to see that the desired result holds true. 
So we may suppose that $\frac{(n-i)k}{n} >1$. 
We  distinguish two cases.

Suppose first that $n-k\le i$. 
We claim that the case $n-k=2$ cannot occur under the assumptions of the theorem. Indeed, if $n-k=2$, then the assumption $\Exp(Y)\in [1,2)$ implies that 
$n/2\le i <n$, while the assumption $\frac{2(n-i)}{n}>1$ implies that $i<n/2$. 
Hence we may assume that $n-k\ge 3$. 
If $n-k\ge 4$, then 
it holds $1\le \Exp(Y)\le \min\{i,n-k\}-2$, 
and the result follows from the previous case. The case $n-k=3$ is left to the reader, and the  case $i< n-k$ is obtained along similar reasoning.  

We are left with the case where 
$\Exp(H)\in (\min\{i,k\}-2,\min\{i,k\}-1]$ and $\min\{i,k\}=k$. 
In this case we set $Z = k-H \sim \text{Hyp}(n,n-i,k)$ and we proceed in a similar manner as in the previous case, by the symmetry between the parameters $i$ and $k$.   
\end{proof}

\subsection{Tail conditional expectation}
\label{hyp_dist3} 

In this section we deduce an upper bound on the tail conditional expectation of the hypergeometric distribution. 
In order to state our results, we will need  some basic facts from the theory of stochastic orders, which we briefly discuss here and refer the reader to 
~\cite{Shaked_Shantikumar}, or~\cite{Klenke_Mattner}, for further details and references. 
A random variable $X$ is said to be smaller than the random variable $Y$ in the \emph{usual stochastic order}, denoted $X\le_{st} Y$, if 
it holds 
\[
\Prb(X\ge t) \le \Prb(Y\ge t) \, ,\, \text{ for all } \, t\in \mathbb{R} \, .
\]
Clearly, if $X\le_{st} Y$ then it holds $\Exp(X)\le \Exp(Y)$. 
A random variable $X$  is said to be smaller than the random variable $Y$ in the \emph{likelihood ratio order}, denoted $X\le_{lr} Y$, if the sequence $a_m = \frac{\Prb(X=m)}{\Prb(Y=m)}$ is decreasing in $m$ over the union of the supports of $X$ and $Y$. It is known (see~\cite[Theorem~1.C.1]{Shaked_Shantikumar}, or~\cite{Klenke_Mattner}) that 
$X\le_{lr} Y$ implies $X\le_{st} Y$.

We begin with an upper bound on the tail conditional expectation of random variables whose mean is also a median. 

\begin{lemma}\label{tce_median_bound}
Let $X$ be a random variable whose mean $\mu:=\Exp(X)$ is also a median. 
Then 
\[
\Exp(X\mid X\ge \mu) \le \mu + \sqrt{\text{Var}(X)} \, .
\]
\end{lemma}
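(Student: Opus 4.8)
The plan is to reduce the tail conditional expectation to the mean absolute deviation of $X$, using the two hypotheses for distinct purposes: the fact that $\mu$ is a \emph{median} will control the probability of the conditioning event, while the fact that $\mu$ is the \emph{mean} will let me rewrite the relevant one-sided deviation symmetrically. If $\text{Var}(X)=\infty$ the asserted bound is trivial, so I may assume throughout that $X$ is square-integrable.

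First I would set $p:=\Prb(X\ge\mu)$ and record that, since $\mu$ is a median, $p\ge\tfrac12$ (in particular $p>0$, so the conditional expectation is well defined). Writing the conditional expectation as a ratio and subtracting $\mu=\mu p/p$, the quantity of interest becomes
\[
\Exp(X\mid X\ge\mu)-\mu \;=\; \frac{\Exp\!\big((X-\mu)\mathbf{1}_{\{X\ge\mu\}}\big)}{p}\;=\;\frac{\Exp\!\big((X-\mu)^{+}\big)}{p},
\]
where $(X-\mu)^{+}:=\max\{X-\mu,0\}$ and the last equality is the pointwise identity $(X-\mu)\mathbf{1}_{\{X\ge\mu\}}=(X-\mu)^{+}$.

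The key step will be the identity $\Exp\!\big((X-\mu)^{+}\big)=\tfrac12\,\Exp|X-\mu|$, and this is precisely where the mean hypothesis enters: since $\Exp(X-\mu)=0$, the positive and negative parts of $X-\mu$ have equal expectation, whence $\Exp|X-\mu|=\Exp\!\big((X-\mu)^{+}\big)+\Exp\!\big((X-\mu)^{-}\big)=2\,\Exp\!\big((X-\mu)^{+}\big)$. Substituting this and invoking $p\ge\tfrac12$ gives
\[
\Exp(X\mid X\ge\mu)-\mu\;=\;\frac{\Exp|X-\mu|}{2p}\;\le\;\Exp|X-\mu|.
\]
I would then close the argument by bounding the mean absolute deviation by the standard deviation through the Cauchy--Schwarz (equivalently Jensen) inequality, $\Exp|X-\mu|\le\sqrt{\Exp\!\big((X-\mu)^{2}\big)}=\sqrt{\text{Var}(X)}$.

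In all honesty there is no serious obstacle here; the lemma is elementary once one spots the right decomposition. The only genuine content is the observation that the mean hypothesis converts the one-sided tail integral into half of the mean absolute deviation, while the median hypothesis supplies exactly the factor $p\ge\tfrac12$ needed to pass from that integral to the conditional average. The factor-of-two slack coming from $p\ge\tfrac12$ indicates the estimate is generally not tight, but it is the clean form that will be convenient when combining it with the mean-absolute-deviation lower bound in the proof of Theorem~\ref{main2}.
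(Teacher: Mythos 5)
Your argument is correct and coincides with the paper's own proof: both use the median hypothesis to get $\Prb(X\ge\mu)\ge\tfrac12$, the mean hypothesis to identify $\Exp\bigl(\max\{0,X-\mu\}\bigr)$ with $\tfrac12\Exp|X-\mu|$, and then Jensen/Cauchy--Schwarz to bound the mean absolute deviation by $\sqrt{\text{Var}(X)}$. There is nothing to add.
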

\begin{proof} 
Note that the assumption that $\mu$ is a median yields   
\[
\Exp(\max\{0,X-\mu\}) = \Prb(X\ge \mu) \cdot \Exp(X-\mu\mid X\ge \mu) \ge \frac{1}{2} \cdot \Exp(X-\mu\mid X\ge \mu) \, .
\]
Hence it holds  
\[
 \Exp(X-\mu\mid X\ge \mu) \le 2\cdot \Exp(\max\{0,X-\mu\}) \, .
\]
Now, set $M^+ = \max\{0,X-\mu\}$ and $M^- = \max\{0, \mu-X\}$ and note that 
\[
M^+ - M^- = X-\mu \quad\quad\text{ and }\quad\quad 
M^+ + M^- = |X-\mu| \, .
\]
Therefore, we have $\Exp(M^+) = \Exp(M^-)$ as well as\, $2\cdot \Exp(M^+) =\Exp(|X-\mu|)$, and we conclude that  
\[
 \Exp(X-\mu\mid X\ge \mu) \le 2\cdot \Exp(M^+) = \Exp(|X-\mu|) \le \sqrt{\Exp((X-\mu)^2)} = \sqrt{\text{Var}(X)} \, ,
\]
as desired. 
\end{proof}

Our next result asserts that the tail conditional expectation of the hypergeometric distribution is dominated by the tail conditional expectation of the binomial distribution; thus an upper bound on the latter is also an upper bound on the former.

\begin{lemma}\label{tce_conj}
Let $H\sim\text{Hyp}(n,i,k)$ and $X\sim\text{Bin}(k, \frac{i}{n})$.
Let $H^{\ast}$ denote the  random variable $H$ conditional on the event $\{H\ge ik/n\}$. Similarly, let 
$X^{\ast}$ denote the  random variable $X$ conditional on the event $\{X\ge ik/n\}$.
Then   
\[
H^{\ast}\, \le_{st} \, X^{\ast} \, .
\]
In particular, it holds 
\[
\Exp(H\mid H\ge \Exp(H)) \le \Exp(X\mid X\ge \Exp(X) ) \, .
\]
\end{lemma}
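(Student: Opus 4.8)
The plan is to establish the stronger statement that $H^{\ast} \le_{lr} X^{\ast}$ and then invoke the implication $\le_{lr}\,\Rightarrow\,\le_{st}$ recorded above. Since $\Exp(H) = \frac{ik}{n} = \Exp(X)$, both conditional variables are obtained by restricting the respective laws to the \emph{same} upper set $\{m : m \ge ik/n\}$ and renormalizing. Consequently, for every $m$ in the union of the conditional supports,
\[
\frac{\Prb(H^{\ast}=m)}{\Prb(X^{\ast}=m)} = \frac{\Prb(X\ge ik/n)}{\Prb(H\ge ik/n)}\cdot \frac{\Prb(H=m)}{\Prb(X=m)} = c\cdot r_m \, ,
\]
where $c>0$ is a constant independent of $m$ and $r_m := \Prb(H=m)/\Prb(X=m)$. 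Thus it suffices to show that $r_m$ is non-increasing on the range $m \ge ik/n$, which is precisely the conditional support.

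First I would compute the one-step ratios. Using the mass functions of the hypergeometric and binomial laws,
\[
\frac{\Prb(H=m+1)}{\Prb(H=m)} = \frac{(i-m)(k-m)}{(m+1)(n-i-k+m+1)} \, , \qquad \frac{\Prb(X=m+1)}{\Prb(X=m)} = \frac{k-m}{m+1}\cdot\frac{i}{n-i} \, ,
\]
so that after cancellation
\[
\frac{r_{m+1}}{r_m} = \frac{(i-m)(n-i)}{i\,(n-i-k+m+1)} \, .
\]
A direct expansion shows that this quantity is at most $1$ precisely when $i(k-1) \le mn$. The key step---short here---is to observe that on the conditional range one always has $m \ge ik/n$, whence $mn \ge ik > i(k-1)$; the inequality $i(k-1) \le mn$ therefore holds and $r_{m+1}\le r_m$ for every relevant $m$.

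A point deserving care is that the \emph{global} likelihood ratio order $H\le_{lr} X$ fails: for $m=0$ the condition $i(k-1)\le 0$ is violated unless $k=1$. Hence the argument genuinely must be carried out on the conditional support rather than by restricting a global order, and this is the main subtlety. One should also check the boundary where the supports differ: for $\min\{i,k\} < m \le k$ one has $\Prb(H=m)=0$, hence $r_m=0$, which is consistent with $r_m$ remaining non-increasing as it passes the top of the hypergeometric support; and since the mean $ik/n$ always lies within $[\max\{0,k-(n-i)\},\min\{i,k\}]$, no difficulty arises at the lower end. Having established that $c\cdot r_m$ is non-increasing, I conclude $H^{\ast}\le_{lr} X^{\ast}$ and therefore $H^{\ast}\le_{st} X^{\ast}$. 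Finally, since the usual stochastic order implies domination of means, we obtain $\Exp(H^{\ast})\le \Exp(X^{\ast})$, which is exactly the asserted inequality $\Exp(H\mid H\ge \Exp(H)) \le \Exp(X\mid X\ge \Exp(X))$.
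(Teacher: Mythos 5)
Your proof is correct and follows essentially the same route as the paper: both establish $H^{\ast}\le_{lr}X^{\ast}$ by computing the one-step ratio $\frac{(i-m)(n-i)}{i(n-i-k+m+1)}$ and observing it is at most $1$ exactly when $m\ge i(k-1)/n$, which holds throughout the conditional support $m\ge ik/n$. Your additional remarks (that the global likelihood ratio order fails at $m=0$, and the check at the top of the hypergeometric support) are accurate but not needed beyond what the paper records.
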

\begin{proof}
Observe that it is enough to show that $H^{\ast} \le_{lr}  X^{\ast}$, and  that it holds 
\[
H^{\ast} \in \left\{\left\lceil\frac{ik}{n} \right\rceil,\ldots, \min\{i,k\}\right\} 
\quad\quad\text{ and }\quad\quad X^{\ast} \in \left\{\left\lceil\frac{ik}{n} \right\rceil,\ldots, k\right\} \, .
\]
Let $a_m = \frac{\Prb\left(H^{\ast}=m \right)}{\Prb\left(X^{\ast}=m\right)}$, for $m\in  \{\lceil\frac{ik}{n} \rceil,\ldots,k\}$, and note that $a_m=0$ when $m > \min\{i,k\}$. Hence, we have  to show that the sequence $\{a_m\}_{m=\lceil ik/n\rceil}^{\min\{i,k\}}$ is decreasing.  
Since 
\[
a_m = \frac{\Prb\left(H^{\ast}=m \right)}{\Prb\left(X^{\ast}=m\right)} = \frac{\Prb(H=m\mid H\ge ik/n)}{\Prb(X=m\mid X\ge ik/n)} = \frac{\Prb(H=m)\cdot \Prb(X\ge ik/n)}{\Prb(X=m)\cdot \Prb(H\ge ik/n)} \, ,
\]
it is straightforward to verify that it holds   
\[
\frac{a_m}{a_{m+1}} = \frac{i(n-i-k+m+1)}{(n-i)(i-m)} \, .
\]
The result follows upon observing that $\frac{a_m}{a_{m+1}}\ge 1$ when $m\ge \frac{i(k-1)}{n}$. 
\end{proof}

Moreover, we will need the following comparison between the tail conditional expectations of binomial random variables.

\begin{lemma}\label{tce_binomial}
Let $k$ be a positive integer, and $p,q\in (0,1)$ be such that $p\le q$. Let $X_p\sim\text{Bin}(k,p)$ ,   $X_q\sim\text{Bin}(k,q)$ 
and fix a positive integer $m\in \{0,1,\ldots,k\}$. Then 
\[
\Exp(X_p\mid X_p \ge m) \,\le\, \Exp(X_q\mid X_q \ge m) \, .
\]
\end{lemma}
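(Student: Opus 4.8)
The plan is to upgrade the inequality between expectations to a statement about stochastic orders, mirroring the strategy of Lemma~\ref{tce_conj}. Writing $X_p^{\ast}$ for $X_p$ conditioned on $\{X_p\ge m\}$ and $X_q^{\ast}$ for $X_q$ conditioned on $\{X_q\ge m\}$, I would show that $X_p^{\ast}\le_{lr} X_q^{\ast}$, which by the implication $\le_{lr}\Rightarrow\le_{st}$ recorded above yields $X_p^{\ast}\le_{st} X_q^{\ast}$, and hence $\Exp(X_p^{\ast})\le\Exp(X_q^{\ast})$, which is exactly the claim. Note first that for $p,q\in(0,1)$ and $m\in\{0,\ldots,k\}$ both conditioned variables are supported on the same set $\{m,m+1,\ldots,k\}$ and that the conditioning is well defined, since $\Prb(X_p\ge m)\ge\Prb(X_p=k)=p^k>0$ and similarly for $q$.

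For the likelihood ratio, consider $b_j:=\Prb(X_p^{\ast}=j)/\Prb(X_q^{\ast}=j)$ for $j\in\{m,\ldots,k\}$. Since conditioning merely divides by the constants $\Prb(X_p\ge m)$ and $\Prb(X_q\ge m)$, I can write
\[
b_j=\frac{\Prb(X_p=j)}{\Prb(X_q=j)}\cdot\frac{\Prb(X_q\ge m)}{\Prb(X_p\ge m)}\, ,
\]
so the $j$-dependence sits entirely in the unconditional ratio. Using the binomial mass function,
\[
\frac{\Prb(X_p=j)}{\Prb(X_q=j)}=\left(\frac{p}{q}\right)^{\!j}\left(\frac{1-p}{1-q}\right)^{\!k-j}\, ,
\]
whose ratio of consecutive terms equals $\frac{p(1-q)}{q(1-p)}$; since $p\le q$ gives $p(1-q)\le q(1-p)$, this ratio is at most $1$. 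Hence $b_j$ is non-increasing in $j$, i.e. $X_p^{\ast}\le_{lr} X_q^{\ast}$, which completes the argument.

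There is no genuine obstacle here: the only point requiring care is observing that the normalizing factors introduced by the conditioning are constant in $j$ and therefore do not affect monotonicity of the likelihood ratio, reducing the whole lemma to the elementary inequality $p(1-q)\le q(1-p)$. The same conclusion could alternatively be phrased by noting that $X_p\le_{lr}X_q$ for the unconditioned binomials and that the likelihood ratio order is preserved under conditioning to an upper set, but I prefer the direct computation, since it keeps the proof self-contained and avoids invoking a preservation property not recorded in the excerpt.
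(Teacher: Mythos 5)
Your proof is correct. Note, however, that the paper does not actually prove this lemma at all: it simply records it as a known result and cites Broman--van de Brug--Kager--Meester, Klenke--Mattner, and Pelekis--Ramon. Your argument therefore supplies a genuine, self-contained proof where the paper defers to the literature, and it does so using exactly the machinery the paper has already set up for Lemma~\ref{tce_conj}: you establish the stronger conclusion $X_p^{\ast}\le_{lr}X_q^{\ast}$ (hence $X_p^{\ast}\le_{st}X_q^{\ast}$, hence the inequality of conditional means), and the computation reduces cleanly to $\frac{p(1-q)}{q(1-p)}\le 1$, which is equivalent to $p\le q$. The two points that require care --- that the normalizing constants $\Prb(X_p\ge m)$ and $\Prb(X_q\ge m)$ do not depend on $j$, and that both conditioned variables share the support $\{m,\ldots,k\}$ because $p,q\in(0,1)$ --- are both handled explicitly. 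What your approach buys is uniformity: the same likelihood-ratio template now proves both Lemma~\ref{tce_conj} and Lemma~\ref{tce_binomial}, and it yields the stochastic-order comparison $X_p^{\ast}\le_{st}X_q^{\ast}$, which is strictly more than the stated expectation inequality; what the citation buys the authors is brevity and a pointer to more general preservation results for conditioning on upper sets.
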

\begin{proof} 
This is a well-known result (see~\cite{Broman_Brug_Kager_Meester}, \cite{Klenke_Mattner}, or~\cite[Lemma~2.4]{Pelekis_Ramon}). 
\end{proof}

We can now state our upper bound on the tail conditional expectation of the hypergeometric distribution. Recall that $\lceil x \rceil=\min\{s\in\mathbb{Z}\,:\, s\ge x\}$.

\begin{theorem}\label{TCE_sqrt_bound}
Let $H\sim\text{Hyp}(n,i,k)$. Then 
\[
\Exp(H\mid H\ge \Exp(H)) \,\,\le \,\, \lceil\Exp(H)\rceil  + \sqrt{\text{Var}(H) \cdot \frac{n-1}{n-k} + 1} \, .
\]
\end{theorem}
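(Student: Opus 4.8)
The plan is to reduce the hypergeometric bound to a purely binomial statement and then exploit the fact that the mean of a binomial is a median precisely when it is an integer. First I would record the identity $\text{Var}(H)\cdot\frac{n-1}{n-k} = k\cdot\frac{i}{n}\cdot\frac{n-i}{n} = \text{Var}(X)$, where $X\sim\text{Bin}(k,\frac{i}{n})$; since also $\Exp(H)=\Exp(X)=ik/n =: \mu$, the target inequality becomes $\Exp(H\mid H\ge\mu)\le \lceil\mu\rceil + \sqrt{\text{Var}(X)+1}$. By Lemma~\ref{tce_conj} we have $\Exp(H\mid H\ge\mu)\le \Exp(X\mid X\ge\mu)$, so it suffices to prove the binomial bound $\Exp(X\mid X\ge\mu)\le \lceil\mu\rceil+\sqrt{\text{Var}(X)+1}$.

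Set $m=\lceil\mu\rceil$ and $p=\frac{i}{n}$. Because $X$ is integer-valued, $\{X\ge\mu\}=\{X\ge m\}$, so the quantity to bound is $\Exp(X\mid X\ge m)$. The key device is to introduce a second binomial $X_q\sim\text{Bin}(k,q)$ with $q=\frac{m}{k}$, chosen so that $q\ge p$ and so that $\Exp(X_q)=kq=m$ is an integer. Applying Lemma~\ref{tce_binomial} with the threshold $m$ gives $\Exp(X\mid X\ge m)\le \Exp(X_q\mid X_q\ge m)$. Since $kq=m$ is an integer, $m$ is a median of $X_q$ (a classical property of the binomial law), so Lemma~\ref{tce_median_bound} applies to $X_q$ and yields $\Exp(X_q\mid X_q\ge m)\le m+\sqrt{\text{Var}(X_q)}$.

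It then remains to compare variances, and this is the crux of the argument. Writing $m=\mu+\delta$ with $\delta\in[0,1)$ and using $\text{Var}(X_q)=m(1-\frac{m}{k})$ together with $\text{Var}(X)=\mu(1-\frac{\mu}{k})$, a direct computation gives $\text{Var}(X_q)-\text{Var}(X)=\delta\bigl(1-\frac{m+\mu}{k}\bigr)$. Since $0\le\delta<1$ and $m+\mu\ge 0$, this difference is strictly less than $1$, so $\text{Var}(X_q)\le \text{Var}(X)+1$. Combining the three estimates gives $\Exp(X\mid X\ge m)\le m+\sqrt{\text{Var}(X)+1}=\lceil\mu\rceil+\sqrt{\text{Var}(X)+1}$, which is the required bound after substituting back $\text{Var}(X)=\text{Var}(H)\cdot\frac{n-1}{n-k}$.

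The main obstacle I anticipate is handling the degenerate boundary cases, since Lemma~\ref{tce_binomial} requires $q\in(0,1)$. The case $q=\frac{m}{k}=1$, i.e.\ $m=k$, must be treated separately: there $\{X\ge m\}=\{X=k\}$, so $\Exp(X\mid X\ge m)=k=m\le m+\sqrt{\text{Var}(X)+1}$ directly, bypassing the comparison lemma; the case $p=\frac{i}{n}=1$ (so $H$ is degenerate at $k$) is likewise immediate. Apart from these, I would also take care to justify the classical claim that an integer mean of a binomial is a median, as this is exactly the hypothesis needed to invoke Lemma~\ref{tce_median_bound}.
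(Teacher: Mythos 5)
Your proposal is correct and follows essentially the same route as the paper's proof: reduce to the binomial via Lemma~\ref{tce_conj}, pass to $\text{Bin}(k,\lceil\mu\rceil/k)$ via Lemma~\ref{tce_binomial}, invoke the integer-mean-is-a-median fact together with Lemma~\ref{tce_median_bound}, and finish by checking $\text{Var}(X_q)\le\text{Var}(X)+1$ (your exact computation of the variance difference is a clean variant of the paper's direct estimate, and your attention to the boundary case $m=k$ is a small point the paper glosses over).
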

\begin{proof}
From Lemma~\ref{tce_conj} we know that 
\begin{equation}\label{tce_b1}
\Exp(H \mid H\ge \Exp(H)) \le \Exp(X\mid X\ge \Exp(X)) \, ,
\end{equation}
where $X\sim\text{Bin}(k, \frac{i}{n})$.  Let  $m := \lceil ik/n\rceil$, and consider a random variable 
$Y\sim\text{Bin}(k, \frac{m}{k})$. Since $\frac{i}{n} \le \frac{m}{k}$, it follows from Lemma~\ref{tce_binomial} that 
\begin{equation}\label{tce_b2}
\Exp(X \mid X\ge \Exp(X)) \,\le\, \Exp(Y \mid Y\ge \Exp(X)) \,=\, \Exp(Y \mid Y\ge \Exp(Y)) \, ,
\end{equation}
where the last equality follows from the fact that $\Exp(Y) = m=\lceil \Exp(X)\rceil $. 
Since $Y$ is a binomial random variable whose mean is an integer, it is well-known (see~\cite{Jogdeo_Samuels} or~\cite[Theorem~2.2]{Siegel})
that $m$ is also a median of $Y$. Hence Lemma~\ref{tce_median_bound} implies that 
\begin{equation}\label{tce_b3}
\Exp(Y \mid Y\ge \Exp(Y)) \le m + \sqrt{\text{Var}(Y)} \, . 
\end{equation}
Summarizing the above, it follows from~\eqref{tce_b1}, \eqref{tce_b2} and \eqref{tce_b3} that 
\begin{eqnarray*}
\Exp(H \mid H\ge \Exp(H)) &\le& m + \sqrt{k\cdot \frac{m}{k}\cdot\frac{k-m}{k}} 
\,=\, m + \sqrt{m \cdot\frac{k-m}{k}} \\
&\le&  m + \sqrt{\left(\frac{ik}{n} +1\right) \cdot\frac{k-ik/n}{k}} 
\,=\,  m + \sqrt{\left(\frac{ik}{n} +1\right) \cdot\frac{n-i}{n}}  \\
&\le& m + \sqrt{k\cdot \frac{i}{n}\cdot \frac{n-i}{n} + 1}
\,=\, m + \sqrt{\text{Var}(H) \cdot \frac{n-1}{n-k} + 1} \, ,
\end{eqnarray*}
as desired. 
\end{proof}

\section{Proof of Theorem~\ref{main}}
\label{main_proof}

Throughout this section, we assume that 
$n,k$ are positive integers such that $n\ge 8k$, and  $i\in [n]$, all the parameters being fixed.
Moreover, we fix a hypergeometric random variable  $H\sim\text{Hyp}(n,i,k)$ and a binomial random variable   $X\sim\text{Bin}(k,\frac{i}{n})$  having the same mean as $H$. Our proof employs  the following  result of Ehm (see~\cite[Theorem~$2$]{Ehm}). 
Recall that the \emph{total variation distance} between $H$ and $X$ is defined as 
\[
d_{TV}(H \, , \, X) = \sup_{A\subseteq \mathbb{N}} \, |\Prb(H\in A) - \Prb(X\in A)| \, . 
\]

\begin{theorem}[Ehm~\cite{Ehm}]
\label{Ehm}
    Suppose that the parameters $n,i,k$ are such that\, $k\cdot\frac{i}{n}\cdot \frac{n-i}{n}\ge 1$. Then 
    \[
    d_{TV}(H \, , \, X) \le \frac{k-1}{n-1} \, . 
    \]
\end{theorem}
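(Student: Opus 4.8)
The plan is to obtain this as a special case of Ehm's general inequality for binomial approximation of a Poisson--binomial law, exploiting the representation (recalled in the introduction, due to Vatutin--Mikhailov) of a hypergeometric variable as a sum of independent Bernoullis. First I would write $H=\sum_{j=1}^{k}\xi_j$ with independent $\xi_j\sim\text{Bernoulli}(p_j)$; this is legitimate because the probability generating function of $\text{Hyp}(n,i,k)$ has only real roots and hence factors into linear terms. Since $\Exp(H)=ik/n$, the average success probability is $\bar p:=\frac{1}{k}\sum_{j=1}^{k}p_j=\frac{i}{n}$, which is precisely the parameter of $X\sim\text{Bin}(k,\frac{i}{n})$, so $X$ is the binomial with the same number of trials and the same mean as the Poisson--binomial $H$.

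Next I would invoke Ehm's general bound: for any Poisson--binomial $S=\sum_{j}\xi_j$ with mean parameter $\bar p$ and $B\sim\text{Bin}(k,\bar p)$,
\[
d_{TV}(S,B)\;\le\;\frac{1-\bar p^{\,k+1}-(1-\bar p)^{k+1}}{(k+1)\,\bar p\,(1-\bar p)}\cdot\sum_{j=1}^{k}(p_j-\bar p)^2 .
\]
The only quantity here depending on the individual (and a priori unknown) $p_j$ is $\sum_j(p_j-\bar p)^2$, and this can be evaluated purely through variances. Indeed $\text{Var}(H)=\sum_j p_j(1-p_j)=k\bar p-\sum_j p_j^2$, so that $\sum_j(p_j-\bar p)^2=k\bar p(1-\bar p)-\text{Var}(H)=\text{Var}(X)-\text{Var}(H)$. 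Using $\text{Var}(H)=\text{Var}(X)\cdot\frac{n-k}{n-1}$ (both variance formulas are the ones recalled in the introduction) this collapses to the clean identity $\sum_j(p_j-\bar p)^2=\text{Var}(X)\cdot\frac{k-1}{n-1}$.

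Substituting this together with $\text{Var}(X)=k\bar p(1-\bar p)$ into Ehm's bound, the factor $\bar p(1-\bar p)$ cancels and I obtain
\[
d_{TV}(H,X)\;\le\;\frac{k}{k+1}\Bigl(1-\bar p^{\,k+1}-(1-\bar p)^{k+1}\Bigr)\cdot\frac{k-1}{n-1}\;\le\;\frac{k-1}{n-1},
\]
since both $\frac{k}{k+1}$ and $1-\bar p^{\,k+1}-(1-\bar p)^{k+1}$ lie in $[0,1]$. Note that the hypothesis $k\cdot\frac{i}{n}\cdot\frac{n-i}{n}\ge 1$ serves mainly to keep the Poisson--binomial non-degenerate (i.e.\ $\bar p(1-\bar p)>0$ with enough mass spread); the displayed chain of inequalities does not otherwise consume it, so in fact a slightly stronger statement is available.

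The genuinely hard step is Ehm's general inequality itself, where all the analytic work resides, and which I would treat as a citable black box since a self-contained proof requires Stein's method for the binomial. There one sets up the binomial Stein operator $(\mathcal{A}g)(x)=(k-x)\bar p\,g(x+1)-x(1-\bar p)\,g(x)$, solves the Stein equation $\mathcal{A}g_A=\mathbf{1}_A-\Prb(B\in A)$ for each target set $A$, and bounds the forward difference $\|\Delta g_A\|_\infty$ by the explicit ``magic factor'' $\frac{1-\bar p^{\,k+1}-(1-\bar p)^{k+1}}{(k+1)\bar p(1-\bar p)}$; a telescoping expansion of $\Exp[(\mathcal{A}g_A)(S)]$ across the independent coordinates then produces $\sum_j(p_j-\bar p)^2$. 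This analytic route is what gives the sharp constant: a naive coupling of sampling with and without replacement only controls the probability of a collision, yielding a weaker $\Theta\!\bigl(\tfrac{k^2}{n}\bigr)$-type bound, so the Stein-method estimate is essential to reach $\frac{k-1}{n-1}$. Granting Ehm's general result, the proof of the stated theorem reduces to the representation step and the two-line variance computation above.
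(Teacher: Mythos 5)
The paper does not actually prove this statement---it is quoted directly as Theorem~2 of Ehm~\cite{Ehm}---and your derivation is a correct reconstruction of exactly how Ehm obtains it there: the Vatutin--Mikhailov real-rootedness representation $H=\sum_j\xi_j$, Ehm's general Stein-method bound for Poisson--binomial laws, and the variance identity $\sum_j(p_j-\bar p)^2=\mathrm{Var}(X)-\mathrm{Var}(H)=\mathrm{Var}(X)\cdot\frac{k-1}{n-1}$, after which the factor $\frac{k}{k+1}\bigl(1-\bar p^{\,k+1}-(1-\bar p)^{k+1}\bigr)\le 1$ is dropped. Your side remark is also accurate: the hypothesis $k\cdot\frac{i}{n}\cdot\frac{n-i}{n}\ge 1$ is not consumed by the upper-bound chain, so the conclusion holds without it.
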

\noindent
We will also need the following constant lower bound on the probability that a binomial random variable exceeds its mean. 

\begin{theorem}[Greenberg, Mohri~\cite{greenberg2014tight}]
\label{Greenberg_Mohri}
Let $k$ be a positive integer and let $p\in (\frac{1}{k},1)$. If  $Y\sim\text{Bin}(k,p)$, then 
\[
\mathbb{P}(Y \ge \Exp(Y)) \, \ge \, \frac{1}{4} \, .
\]
\end{theorem}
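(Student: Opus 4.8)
The plan is to reduce the statement to the case of an integer mean, and then to establish the bound there by combining the median property of integer-mean binomials with a direct analysis of a small number of extremal families.

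First I would reduce to an integer mean. Since $Y$ is integer-valued, $\Prb(Y\ge\Exp(Y))=\Prb(Y\ge\lceil kp\rceil)$. For fixed $k$ the family $\text{Bin}(k,p)$ is stochastically increasing in $p$ (couple the variables through $k$ independent uniforms on $[0,1]$), so for each integer $m$ the map $p\mapsto\Prb(\text{Bin}(k,p)\ge m)$ is nondecreasing. On the interval $p\in(\frac{m-1}{k},\frac{m}{k}]$ one has $\lceil kp\rceil=m$, so the infimum of $\Prb(Y\ge\Exp(Y))$ over this interval is approached as $p\downarrow\frac{m-1}{k}$, where by continuity of the binomial CDF in $p$ (it is a polynomial in $p$) the limit equals $\Prb(\text{Bin}(k,\frac{m-1}{k})\ge m)$. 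Because the hypothesis $p>1/k$ forces $m\ge 2$, it suffices to prove, for every integer $k\ge 2$ and every integer $\ell\in\{1,\dots,k-1\}$, that $\Prb(\text{Bin}(k,\frac{\ell}{k})\ge\ell+1)\ge\frac14$.

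In this integer-mean case write $Y\sim\text{Bin}(k,\frac{\ell}{k})$, so that $\Exp(Y)=\ell$, and set $U=\Prb(Y\ge\ell+1)$ and $q_\ell=\Prb(Y=\ell)$. Since a binomial with integer mean has that mean as a median (the fact invoked in the proof of Theorem~\ref{TCE_sqrt_bound}), we have $\Prb(Y\ge\ell)\ge\frac12$ and hence $U\ge\frac12-q_\ell$. Thus $U\ge\frac14$ holds immediately whenever $q_\ell\le\frac14$, which already settles every case in which the distribution is sufficiently spread out: $q_\ell$ is the modal probability, and it exceeds $\frac14$ only when the variance $\frac{\ell(k-\ell)}{k}$ is small.

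The main obstacle is the complementary regime $q_\ell>\frac14$, in which the median bound falls short by exactly $q_\ell-\frac14$ and the missing mass must be recovered from the upper tail. Here I would use the exact ratio $\frac{\Prb(Y=\ell+1)}{\Prb(Y=\ell)}=\frac{k-\ell}{\ell+1}\cdot\frac{\ell}{k-\ell}=\frac{\ell}{\ell+1}$, together with the subsequent (decreasing) ratios, to bound $U$ from below by a geometric-type sum. The constraint $q_\ell>\frac14$ forces the variance $\frac{\ell(k-\ell)}{k}$ to be bounded, which confines the analysis to the two one-parameter families $\ell\in\{1,2\}$ and (via the reflection $Y\mapsto k-Y$, which carries the upper tail of $\text{Bin}(k,\frac{\ell}{k})$ to the lower tail of $\text{Bin}(k,\frac{k-\ell}{k})$) $k-\ell\in\{1,2\}$, together with finitely many additional small configurations. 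Each such family is a single sequence in $k$ that can be estimated directly; for instance, when $\ell=1$ one computes $U=1-(1-\frac1k)^{k-1}(2-\frac1k)$, and since $(1-\frac1k)^{k-1}(2-\frac1k)$ decreases in $k$ from its value $\frac34$ at $k=2$ toward $\frac{2}{e}$, one obtains $U\ge\frac14$ with equality only at $k=2$. This also pinpoints the tight instance $(k,p)\to(2,\frac12^+)$, where $\Prb(\text{Bin}(2,p)\ge 2)=p^2\to\frac14$, and I expect the uniform control of these remaining extremal sequences to be the crux of the whole argument.
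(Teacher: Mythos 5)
This statement is quoted in the paper from Greenberg and Mohri \cite{greenberg2014tight} and is not proved there, so there is no internal argument to compare against; your attempt has to be judged as a standalone proof. The two preparatory steps are correct: the reduction to the integer-mean case (for $p\in(\frac{m-1}{k},\frac{m}{k}]$ one has $\Prb(Y\ge\Exp(Y))=\Prb(Y\ge m)\ge\Prb(\text{Bin}(k,\tfrac{m-1}{k})\ge m)$ by stochastic monotonicity in $p$) is the standard first move, the median inequality $U\ge\frac12-q_\ell$ is valid since an integer-mean binomial has its mean as a median, and you correctly identify the extremal configuration $(k,p)\to(2,\tfrac12^{+})$.

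The gap is in the regime $q_\ell>\frac14$, which is exactly where the content of the theorem lives, and there you have a plan rather than a proof. First, you never establish a quantitative upper bound on the modal probability $q_\ell$ (something like $q_\ell\le \frac{e^{1/12}}{\sqrt{2\pi}}\sqrt{\frac{k}{\ell(k-\ell)}}$, which would follow from the Robbins--Stirling bound~\eqref{robbins_form} already used in the paper), so the claim that $q_\ell>\frac14$ confines $(k,\ell)$ to a short list is unsupported; moreover the list is larger than you suggest: $q_\ell>\frac14$ also holds for $\min\{\ell,k-\ell\}=3$ with roughly $6\le k\le 15$ and for $\min\{\ell,k-\ell\}=4$ with $8\le k\le 10$, and none of these are checked. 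Second, only the family $\ell=1$ is actually verified, and even there the monotonicity of $(1-\frac1k)^{k-1}(2-\frac1k)$ is asserted rather than proved (only the bound $\le\frac34$ is needed, but it still needs an argument, since the expression is a product of a decreasing and an increasing factor). Third, the reflection $Y\mapsto k-Y$ sends the upper tail $\Prb(\text{Bin}(k,\tfrac{\ell}{k})\ge\ell+1)$ to the \emph{lower} tail $\Prb(\text{Bin}(k,\tfrac{k-\ell}{k})\le k-\ell-1)$, which is not the quantity you are bounding, so the families $k-\ell\in\{1,2\}$ (e.g. $\Prb(Y=k)=(\tfrac{k-1}{k})^{k}$, which increases to $\tfrac1e$) require their own direct computations rather than being carried over from the $\ell\in\{1,2\}$ analysis. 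The skeleton is sound and close in spirit to the actual Greenberg--Mohri argument, but the case analysis that constitutes the bulk of that argument is missing.
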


To prove Theorem~\ref{main}, we will distinguish four cases. 
We begin with the case where $k\cdot\frac{i}{n}\cdot \frac{n-i}{n}\ge 1$, which follows readily  from the above mentioned theorems.

\begin{lemma}\label{prop_main}
Suppose that $n,i,k$ are such that\, $k\cdot\frac{i}{n}\cdot \frac{n-i}{n}\ge 1$ and $n\ge 8k$. 
Then 
\[
\mathbb{P}(H \ge ik/n) \,\ge\, k/n \, .
\]
\end{lemma}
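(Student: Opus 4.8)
The plan is to treat this as the ``large variance'' regime and simply assemble the two cited external results, since the hypothesis $k\cdot\frac{i}{n}\cdot\frac{n-i}{n}\ge 1$ is precisely what makes both Greenberg--Mohri and Ehm's total variation bound available. Write $p=\frac{i}{n}$ and recall $\Exp(X)=kp=\frac{ik}{n}=\Exp(H)$. First I would observe that the variance hypothesis forces $p\in(\frac1k,1)$: since the product $kp(1-p)$ is positive we must have $0<p<1$, and since $1-p<1$ we get $kp\ge kp(1-p)\ge 1$, i.e.\ $p>\frac1k$. Hence Theorem~\ref{Greenberg_Mohri} applies to $X\sim\text{Bin}(k,\frac{i}{n})$ and yields
\[
\Prb\bigl(X\ge \Exp(X)\bigr)\ge \frac14 \, .
\]

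Next I would transfer this lower bound from $X$ to $H$ using the total variation estimate. The hypothesis $k\cdot\frac{i}{n}\cdot\frac{n-i}{n}\ge 1$ is exactly the assumption of Theorem~\ref{Ehm}, so $d_{TV}(H,X)\le \frac{k-1}{n-1}$. Applying the definition of $d_{TV}$ to the upper set $\{m\in\mathbb{N}:m\ge ik/n\}$ and using $\Exp(H)=\Exp(X)=\frac{ik}{n}$, I obtain
\[
\Prb\bigl(H\ge ik/n\bigr)\,\ge\,\Prb\bigl(X\ge ik/n\bigr)-d_{TV}(H,X)\,\ge\,\frac14-\frac{k-1}{n-1}\, .
\]

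It then remains to check the elementary inequality $\frac14-\frac{k-1}{n-1}\ge \frac{k}{n}$, which is where the hypothesis $n\ge 8k$ enters. Since $k\le n$ we have $\frac{k-1}{n-1}\le \frac{k}{n}$ (cross-multiplying, $(k-1)n\le k(n-1)$ reduces to $k\le n$), and $n\ge 8k$ gives $\frac{k}{n}\le \frac18$; hence $\frac{k}{n}+\frac{k-1}{n-1}\le \frac{2k}{n}\le \frac14$, which rearranges to the claim. There is no genuinely hard step here: the only real work is confirming that the variance condition simultaneously supplies the hypotheses of both cited theorems and then doing the final arithmetic. I would also note, as a sanity check matching the remark following Theorem~\ref{main}, that the constant $8$ arises as $2/\tfrac14$: replacing Greenberg--Mohri's $\frac14$ by a tail bound of $\frac12$ would, by the same computation ($\frac{2k}{n}\le c$ needs $n\ge \frac{2k}{c}$), relax the requirement to $n\ge 4k$, consistent with the conjectured threshold.
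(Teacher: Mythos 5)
Your proof is correct and follows essentially the same route as the paper: apply Ehm's total variation bound to transfer the Greenberg--Mohri lower bound $\Prb(X\ge \Exp(X))\ge \frac14$ from the binomial to the hypergeometric, then verify $\frac14 \ge \frac{k}{n}+\frac{k-1}{n-1}$ using $n\ge 8k$. The only difference is that you spell out the final arithmetic (and the check that $p>\frac1k$) in slightly more detail than the paper does.
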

\begin{proof}
From Theorem~\ref{Ehm} we know that 
\[
\mathbb{P}(H \ge ik/n) \ge \mathbb{P}(X \ge ik/n) - \frac{k-1}{n-1}
\]
and is therefore enough to show that 
$\mathbb{P}(X \ge ik/n)  \ge \frac{k}{n} + \frac{k-1}{n-1}$. 
Since $k\cdot\frac{i}{n}\cdot \frac{n-i}{n}\ge 1$,  it also holds 
$k\cdot\frac{i}{n}> 1$, and 
Theorem~\ref{Greenberg_Mohri} implies that 
$\mathbb{P}(X \ge ik/n) \ge \frac{1}{4}$.
Hence  it is enough to show that 
$\frac{1}{4} \ge \frac{k}{n} + \frac{k-1}{n-1}$, 
which is clearly correct when $n\ge 8k$. 
\end{proof}

It follows from Lemma~\ref{prop_main} that we may suppose for the rest of this section that the parameters $n,i,k$ satisfy 
\begin{equation}\label{main_assumption1}
k\cdot\frac{i}{n}\cdot \frac{n-i}{n}< 1 \quad \text{ and } \quad n\ge 8k \, .
\end{equation}
This leaves three cases to consider, depending on whether $i$ is smaller than, equal to, or larger than $n/2$. 
Those three cases above are considered in the following three lemmata, whose proofs complete the proof of Theorem~\ref{main}. 
We begin with the first case. 

\begin{lemma}\label{mms1}
Suppose that~\eqref{main_assumption1} holds true, and that  $i<n/2$.  
Then  
\[
\mathbb{P}(H \ge ik/n) \, \ge \, \frac{k}{n} \, .
\]
\end{lemma}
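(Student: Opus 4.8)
The plan is to first extract from the hypotheses that $\Exp(H)=ik/n<2$: since $i<n/2$ we have $\frac{n-i}{n}>\frac12$, so the assumption $k\cdot\frac{i}{n}\cdot\frac{n-i}{n}<1$ forces $\frac{ik}{n}<\frac{n}{n-i}<2$. Because $H$ is integer-valued, the event $\{H\ge ik/n\}$ equals $\{H\ge 1\}$ when $ik/n\le 1$ and equals $\{H\ge 2\}$ when $1<ik/n<2$, so I would split the argument into these two regimes. In each, the tool is Lemma~\ref{clean_form} with $m=\lceil ik/n\rceil\in\{1,2\}$, which rewrites the tail as $\Prb(H\ge m)=\frac{\binom{k}{m}\binom{i}{m}}{\binom{n}{m}}\cdot\Exp\!\left(\frac{1}{\binom{Z_m+m}{m}}\right)$, where $Z_m\sim\text{Hyp}(n-m,i-m,k-m)$ has mean $\mu_m=\frac{(k-m)(i-m)}{n-m}$. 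The function $x\mapsto 1/\binom{x+m}{m}=m!\prod_{j=1}^{m}(x+j)^{-1}$ is log-convex, hence convex, so Jensen's inequality yields $\Exp\!\left(\frac{1}{\binom{Z_m+m}{m}}\right)\ge \frac{1}{\binom{\mu_m+m}{m}}$, reducing the goal in each regime to an explicit inequality in $n,i,k$.

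For the regime $ik/n\le 1$ (the small-variance case), taking $m=1$ gives $\Prb(H\ge 1)\ge \frac{ik}{n}\cdot\frac{n-1}{(k-1)(i-1)+(n-1)}$, and the target $\Prb(H\ge 1)\ge k/n$ collapses, after clearing denominators, to $(i-1)(n-k)\ge 0$, which is trivially true; note this regime requires neither $i<n/2$ nor $n\ge 8k$.

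For the regime $1<ik/n<2$, taking $m=2$ and using the convexity of $x\mapsto \frac{2}{(x+1)(x+2)}$ reduces the claim $\Prb(H\ge 2)\ge k/n$ to the polynomial inequality $(k-1)\,i\,(i-1)\ge (n-1)(\mu_2+1)(\mu_2+2)$, with $\mu_2=\frac{(k-2)(i-2)}{n-2}$. I expect this to be the main obstacle. Heuristically there is ample room: here $i\ge 9$ and $k\ge 3$ are forced (from $ik/n>1$ together with $n\ge 8k$ and $i<n/2$), so the left-hand side is of order $ki^2\gtrsim n^2/k$ while the right-hand side is only of order $n$, leaving slack of order $n/k\ge 8$. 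The subtlety is that this slack shrinks for the smallest admissible parameters (say $i=9$ with small $k$), where $\mu_2$ is markedly smaller than $ik/n$ because of the $(k-2)$ factor, so the crude estimate $\mu_2\le ik/n$ is too lossy. I would therefore keep $\mu_2$ in its exact form, clear the $(n-2)^2$ denominator, and verify the resulting inequality directly—e.g.\ by treating the right-hand side as a quadratic in $Q=(k-2)(i-2)$ and bounding via $Q<ik<2n$ together with $i\ge 9$ and $n\ge 8k$ to control its range.
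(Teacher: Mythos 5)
Your decomposition is exactly the paper's: extract $ik/n<2$ from $i<n/2$ and \eqref{main_assumption1}, split into $ik/n\le 1$ and $ik/n\in(1,2)$, and in each case apply Lemma~\ref{clean_form} with $m=\lceil ik/n\rceil$. The $m=1$ case is fine (the paper gets it even more cheaply from $Z_1+1\le i$, with no Jensen needed, and your observation that this case needs neither $i<n/2$ nor $n\ge 8k$ is correct). In the $m=2$ case you apply Jensen to the convex function $x\mapsto 2/((x+1)(x+2))$ rather than, as the paper does, to $1/y$ with $y=(Z_2+1)(Z_2+2)$; your version is slightly stronger, since it drops the variance term $\mathrm{Var}(Z_2)$ from the denominator, so your target inequality $(k-1)i(i-1)\ge(n-1)(\mu_2+1)(\mu_2+2)$ is weaker than (and implied by) the inequality~\eqref{ineq_mm1} that the paper actually establishes. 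So the reduction is sound and the target is true.

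The one real issue is that you have not proved that final inequality, and the route you sketch does not close as stated. Your heuristic ``LHS $\gtrsim n^2/k$ versus RHS $=O(n)$ with slack $n/k\ge 8$'' loses its constants: bounding $\mu_2\le ik/n<2$ gives RHS $\le 12(n-1)$, while $i>n/k$ only gives LHS $\gtrsim n^2/k$, and $\frac{n^2/k}{12n}=\frac{n}{12k}\ge\frac{2}{3}$, which is not $\ge 1$. What actually closes the case (and is what the paper does) is a \emph{two-sided} bound on $B:=\frac{(i-1)(k-1)}{n-1}$: writing the RHS as $(n-1)(\mu_2+1)(\mu_2+2)\le(n-1)(B+1)(B+2)$ (since $\mu_2\le B$) and the LHS as $i(n-1)B$ with $i\ge 9$, it suffices that $B^2-6B+2\le 0$, i.e.\ $B\in[3-\sqrt7,\,3+\sqrt7]$. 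The upper bound is immediate from $B\le ik/n<2$; the lower bound is the step missing from your plan, and it needs more than $Q<ik<2n$ --- e.g.\ $(i-1)(k-1)>n-i-k+1>n-\tfrac n2-\tfrac n8=\tfrac{3n}{8}$ using $ik>n$, $i<n/2$ and $k\le n/8$, giving $B>3/8>3-\sqrt7$. With that lower bound supplied, your argument is complete and runs parallel to the paper's.
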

\begin{proof}
Since $i<n/2$, it follows from~\eqref{main_assumption1}  that $\frac{ik}{n} < \frac{n}{n-i} < 2$, and thus $\frac{ik}{n}\in (0,2)$. 
We consider two cases. 

Suppose first that $\frac{ik}{n} \in (0,1]$.  
We may assume that $i\ge 2$; otherwise the result is clearly correct. 
Since $\frac{ik}{n} \le 1$ it holds $\mathbb{P}(H \ge ik/n) =\mathbb{P}(H \ge 1)$ 
and therefore it is enough to show that
\[
\mathbb{P}(H\ge 1)\ge \frac{k}{n} \, .
\] 
Now, Lemma~\ref{clean_form} implies that 
\[
\mathbb{P}(H\ge 1)=\frac{k}{n} \cdot \mathbb{E}\left(\frac{i}{Z_1 + 1}\right)   \, , 
\]
where $Z_1\sim\text{Hyp}(n-1,i-1,k-1)$.   The result follows upon observing that $i\ge Z_1+1$. 

Suppose now that $\frac{ik}{n} \in (1,2)$ and $i<\frac{n}{2}$. 
Observe that the assumption $n\ge 8k$ implies that $i> \frac{n}{k} \ge 8$, and so we may assume that $i\ge 9$. 
We may also assume that $k\ge 2$; otherwise the result is clearly true. 
Since $1<\frac{ik}{n} < 2$ it follows that 
$\mathbb{P}(H \ge \frac{ik}{n})=\mathbb{P}(H\ge 2)$. 
Moreover, it follows from Lemma~\ref{clean_form} that 
\[
\mathbb{P}(H\ge 2) = \frac{k(k-1)}{n(n-1)} \cdot i(i-1)\cdot \mathbb{E}\left(\frac{1}{(Z_2+2)(Z_2+1)}\right) \, ,
\]
where $Z_2\sim\text{Hyp}(n-2,i-2,k-2)$. 
Jensen's inequality now implies that 
\[
\mathbb{E}\left(\frac{1}{(Z_2+2)(Z_2+1)}\right) \ge 
\frac{1}{\mathbb{E}((Z_2+2)(Z_2+1))} = \frac{1}{\mathbb{E}(Z_2^2+3Z_2 +2)} \, .
\]
Since $Z_2\sim\text{Hyp}(n-2,i-2,k-2)$, we compute 
\[
\mathbb{E}(Z_2^2+3Z_2 +2) = A \cdot 
\frac{(n-i)(n-k)}{(n-2)(n-3)} + A^2 + 3A + 2 \, , \, \text{ where } \, A := \frac{(i-2)(k-2)}{n-2} \, .
\]
Hence, in order to prove the desired result, it is enough to prove that 
\[
\frac{k(k-1)}{n(n-1)} \cdot i(i-1)\cdot \frac{1}{\mathbb{E}(Z_2^2+3Z_2 +2)} \ge \frac{k}{n} \, ,
\]
which is equivalent to
\begin{equation}\label{ineq_mm1}
A \cdot 
\frac{(n-i)(n-k)}{(n-2)(n-3)} + A^2 + 3A + 2 \le i\cdot \frac{(i-1)(k-1)}{n-1} \, .
\end{equation}
Since $\frac{(n-i)(n-k)}{(n-2)(n-3)}<1$, \, $A\le B:=\frac{(i-1)(k-1)}{n-1}$ and $i\ge 9$, we conclude that~\eqref{ineq_mm1} will follow upon showing that 
\begin{equation}\label{ineq_mm2}
B^2 +4B + 2 \le 9B \quad \Longleftrightarrow\quad B^2-5B+2 \le 0 \, .
\end{equation}
Solving for $B$, we find that~\eqref{ineq_mm2} holds true for $B\in \left[\frac{1}{2}(5-\sqrt{17})\, ,\, \frac{1}{2}(5+\sqrt{17})\right]$; hence, since $B\le \frac{ik}{n}\le 2$, the proof will be complete after showing that $B > \frac{1}{2}(5-\sqrt{17})\approx 0.43844$. 
To this end, observe that the assumptions $i\ge 9$ and $i<\frac{n}{2}$ imply that 
\[
\frac{B}{\frac{ik}{n}}=\frac{\frac{(i-1)(k-1)}{n-1}}{\frac{ik}{n}} = \frac{n}{n-1} \cdot \left(1-\frac{1}{i}\right)\cdot \left(1-\frac{1}{k}\right) > \frac{8}{9}\cdot \left(1-\frac{i}{n}\right) >\frac{8}{9}\cdot\frac{1}{2} > 0.44 \, .
\]
Since  $\frac{ik}{n}>1$, it follows that $B > 0.44 \cdot \frac{ik}{n} > 0.44$,  and the result follows. 
\end{proof}

The next result settles the second case. 

\begin{lemma}\label{third_case}
Suppose that~\eqref{main_assumption1} holds true, and that  $i=n/2$. 
Then   
\[
\mathbb{P}(H \ge ik/n) \, \ge \, \frac{k}{n} \, .
\]
\end{lemma}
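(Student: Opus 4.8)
The plan is to exploit the color-exchange symmetry that is available precisely when $i=n/2$. Note first that under~\eqref{main_assumption1} the quantity $k\cdot\frac{i}{n}\cdot\frac{n-i}{n}$ equals $k/4$, so the hypothesis forces $k\le 3$; however, the argument I have in mind needs no such restriction and works verbatim for every $k$. The key observation is that when $i=n/2$ the urn holds equally many black and white marbles, so interchanging the two colors sends $H$ to $k-H$ without changing the law. Formally, $\Prb(H=j)=\binom{n/2}{j}\binom{n/2}{k-j}/\binom{n}{k}=\Prb(H=k-j)$, so the distribution of $H$ is symmetric about $k/2$. Since $\frac{ik}{n}=k/2$, the goal becomes $\Prb(H\ge k/2)\ge k/n$.

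The next step is to extract a clean lower bound of $1/2$ for $\Prb(H\ge k/2)$ from this symmetry, splitting on the parity of $k$. When $k$ is odd there is no atom at the half-integer $k/2$, so $\Prb(H>k/2)$ and $\Prb(H<k/2)$ are equal and sum to $1$; hence $\Prb(H\ge k/2)=\frac{1}{2}$. When $k$ is even, $k/2$ is an integer carrying nonnegative mass, and symmetry gives $\Prb(H\ge k/2)=\Prb(H\le k/2)$; adding these two equal quantities yields $2\Prb(H\ge k/2)=1+\Prb(H=k/2)\ge 1$, so again $\Prb(H\ge k/2)\ge\frac{1}{2}$. To conclude I would simply note that $n\ge 8k$ gives $k/n\le\frac{1}{8}<\frac{1}{2}\le\Prb(H\ge k/2)$, which is the desired inequality.

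I do not expect any genuine obstacle here; the entire content is the symmetry, and the only point requiring care is bookkeeping. In particular I would first record that $n\ge 8k$ forces $k<n/2$, so the support of $H$ is the full range $\{0,1,\ldots,k\}$ and no boundary truncation spoils the symmetry; and in the even case one must keep, rather than discard, the central atom $\Prb(H=k/2)$ when passing from equality of the two tails to the bound $\frac{1}{2}$.
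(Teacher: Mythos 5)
Your proof is correct and takes essentially the same approach as the paper: both arguments rest on the observation that for $i=n/2$ the law of $H$ is symmetric about $k/2$ (the paper via a black--white exchange bijection on samples, you via the identity $\Prb(H=j)=\Prb(H=k-j)$), giving $\Prb(H\ge k/2)\ge \tfrac12 \ge \tfrac{k}{n}$. Your parity bookkeeping for the central atom is a slightly more explicit rendering of the same step.
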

\begin{proof}
Since $i=n/2$, it follows that $\frac{ik}{n} =k/2$. 
We claim that in this case it holds $\mathbb{P}(H \ge \frac{k}{2}) \ge \frac{1}{2}$. 
Since $\frac{k}{n}\le \frac{1}{8}$ the result  follows from the claim. 
To prove the claim, observe that for every sample for which the event $\{H\ge k/2\}$ occurs, there is a unique sample (its complement) for which $\{H\le k/2\}$ occurs. This implies that the event $\{H\ge k/2\}$ occurs for at least half of the samples, and the claim follows. 
\end{proof}

Finally, we consider the third case.  

\begin{lemma}
Suppose that~\eqref{main_assumption1} holds true, and 
that $i>n/2$.  
Then  
\[
\mathbb{P}(H \ge ik/n) \, \ge \, \frac{k}{n} \, .
\]  
\end{lemma}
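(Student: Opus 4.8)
The plan is to transfer the problem, via the symmetry of the hypergeometric law under interchanging the two colours, to the \emph{lower} tail of a hypergeometric variable whose mean is small, and then to reduce that to two explicit inequalities between binomial coefficients. Put $j:=n-i$ and $H':=k-H$, so that $H'\sim\text{Hyp}(n,j,k)$ and, since $k-\frac{ik}{n}=\frac{jk}{n}=\Exp(H')$,
\[
\Prb(H\ge ik/n)=\Prb\!\left(H'\le \Exp(H')\right).
\]
Because $i>n/2$, assumption~\eqref{main_assumption1} yields $\frac{k}{2}\cdot\frac{n-i}{n}<k\cdot\frac in\cdot\frac{n-i}{n}<1$, hence $j<\frac{2n}{k}$ and $\Exp(H')=\frac{jk}{n}<2$; equivalently $\Exp(H)=\frac{ik}{n}\in(k-2,k)$. (If $j=0$ then $H\equiv k$ and the claim is trivial, so I assume $j\ge1$.) In particular $\lceil ik/n\rceil\in\{k-1,k\}$, so the event $\{H\ge ik/n\}=\{H\ge\lceil ik/n\rceil\}$ contains the atom $\{H=\lceil ik/n\rceil\}$, and it suffices to prove $\Prb\big(H=\lceil ik/n\rceil\big)\ge k/n$.

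Using $\binom{n-1}{k-1}=\frac kn\binom nk$, this target splits into two cases. If $ik/n\in(k-1,k)$ then $\lceil ik/n\rceil=k$ and the goal is $\binom ik\ge\binom{n-1}{k-1}$, which I call (A); if $ik/n\in(k-2,k-1]$ then $\lceil ik/n\rceil=k-1$ and the goal is $(n-i)\binom i{k-1}\ge\binom{n-1}{k-1}$, which I call (B). Writing $P:=\binom i{k-1}/\binom{n-1}{k-1}=\prod_{t=0}^{k-2}\frac{i-t}{n-1-t}$, inequality (A) reads $(i-k+1)P\ge k$ and inequality (B) reads $(n-i)P\ge1$.

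For (A) the feasible range is $j<n/k$, so each factor of $P$ satisfies $\frac{i-t}{n-1-t}=1-\frac{j-1}{n-1-t}>1-\frac1k$, giving $P>(1-1/k)^{k-1}\ge e^{-1}$; meanwhile $i-k+1=n-j-k+1>n(1-1/k)-k+1\ge ek$, the final inequality following from $n\ge8k$ (it reduces to $7k-7\ge ek$ at $n=8k$, valid for $k\ge2$). Multiplying the two bounds gives $(i-k+1)P\ge k$. Case (B), where $j\ge n/k$, is the main obstacle: here $\Prb(H=k-1)$ is no longer controlled by a single clean factor, and I would instead analyse $g(j):=(n-i)P=j\,P$ directly. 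A short telescoping computation shows $\frac{g(j+1)}{g(j)}=\frac{(j+1)(n-j-k+1)}{j(n-j)}$, which is at most $1$ precisely when $j\ge(n+1-k)/k$; since $n/k\ge(n+1-k)/k$, the sequence $g$ is non-increasing on the feasible range $j\ge n/k$, so it is enough to verify $g(j)\ge1$ at the largest admissible $j$, namely $j$ just below $\min\{2n/k,\,n/2\}$. Bounding $P\ge\big(\tfrac{n-j-k+2}{\,n-k+1\,}\big)^{k-1}$ and invoking $n\ge8k$ then yields $g(j)\ge1$, completing the proof.

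The delicate point throughout case (B) is that the two constraints $j<2n/k$ (coming from $\Exp(H')<2$) and $j<n/2$ (coming from $i>n/2$) are the binding one in different regimes of $k$ — the former for $k\ge4$, the latter for $k=3$ — and both must be used to close the endpoint estimate. I expect this interplay, together with making the crude bound $P\ge\big(\tfrac{n-j-k+2}{n-k+1}\big)^{k-1}$ tight enough, to be the only genuinely technical step; everything else follows from the symmetry reduction and the observation that $\Exp(H)>k-2$, which is what makes the upper tail collapse onto the two atoms $\{H=k-1\}$ and $\{H=k\}$.
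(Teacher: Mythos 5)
Your reduction is genuinely different from the paper's. The paper also ends up bounding a single atom ($\Prb(H=k)$ in one subcase, $\Prb(H=k-1)$ in the other), but it gets there probabilistically: it applies Lemma~\ref{mean_less_than_one2} to $W=k-H$ to force $\Prb(H=k-1)\ge 7/16$, respectively the mean--median distance bound to force $\Prb(H=k-2)\ge 3/8$, and then transfers to the target atom via the likelihood ratio. You instead write the atom as an explicit ratio of binomial coefficients and estimate it directly, which avoids both Lemma~\ref{mean_less_than_one2} and the median result entirely. Your case (A) is complete and correct: $P>(1-1/k)^{k-1}>e^{-1}$ and $i-k+1>7k-7\ge ek$ for $k\ge2$ do multiply to give $(i-k+1)P>k$ (and $k=1$ is trivial, as is $j=0$).

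The gap is that case (B) is only asserted, and it is exactly the step you flag as delicate. It does close, but you must actually run the numbers, and two points need care. First, when you evaluate $g$ at the largest admissible $j$, you must keep the lower bound $j^*\ge 2n/k-1\ge 15$ on the leading factor (using only $j\ge n/k$ there is too lossy and the estimate fails for small $k$). Second, the algebra of the endpoint: writing $n=ck$ with $c\ge8$ and $k\ge4$, one gets
\[
g(j^*)\;\ge\;(2c-1)\left(\frac{(c-1)(k-2)}{k(c-1)+1}\right)^{k-1},
\]
both factors increasing in $c$, so the minimum is at $c=8$, where the expression equals $15\bigl(\tfrac{7k-14}{7k+1}\bigr)^{k-1}$; this is $\approx1.69$ at $k=4$ and increases towards $15e^{-15/7}\approx1.76$, so it stays above $1$ -- but only barely enough that the verification cannot be waved away. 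For $k=3$ the constraint $j<n/2$ gives $P\ge\bigl(\tfrac{n/2-1}{n-2}\bigr)^{2}=1/4$ and $j\ge n/3\ge 8$, hence $g\ge2$; and $k\le2$ makes case (B) vacuous since $ik/n\le k-1$ contradicts $i>n/2$. With these computations written out, your proof is complete and arguably more self-contained than the paper's, at the cost of a tighter numerical margin.
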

\begin{proof}
Since $i>n/2$, it follows from~\eqref{main_assumption1} that 
$k-\frac{ik}{n} = \frac{k(n-i)}{n} < \frac{n}{i} < 2$, and thus  $\frac{ik}{n} > k-2$. 
Hence it holds $\frac{ik}{n}\in (k-2,k)$. 
We distinguish two cases. 

Suppose first that $\frac{ik}{n}\in (k-1,k]$. 
We have to show that $\mathbb{P}(H=k)\ge \frac{k}{n}$. Since $n\ge 8k$, we may assume that $\mathbb{P}(H=k)<\frac{1}{8}$; otherwise the result holds true. 
We may also assume that $k\ge 2$. 

Set $W = k -H$. Observe that $W\sim\text{Hyp}(n,n-i,k)$ and $\mathbb{E}(W)<1$. 
Now Lemma~\ref{mean_less_than_one2} implies that 
$\mathbb{P}(W=1)\ge \mathbb{P}(W\ge 2)$ and therefore we have  
\begin{equation*}
\mathbb{P}(H= k-1) \ge \mathbb{P}(H \le k-2) \, .
\end{equation*}
Since $\mathbb{P}(H \le k-1) > \frac{7}{8}$, 
upon adding the term $\mathbb{P}(H= k-1)$ on both sides of the previous inequality, we deduce that 
$2\cdot\mathbb{P}(H=k-1) \ge \mathbb{P}(H \le k-1) > \frac{7}{8}$\,; hence  
$\mathbb{P}(H=k-1) \ge\frac{7}{16}$.
Since $\frac{\mathbb{P}(H=k)}{\mathbb{P}(H=k-1)}=\frac{i-k+1}{k(n-i)}$, we conclude that it holds 
\[
\mathbb{P}(H=k) = \frac{i-k+1}{k(n-i)}\cdot \mathbb{P}(H=k-1) > \frac{7}{16}\cdot\frac{i-k+1}{k(n-i)} \, ,
\]
and the result will follow upon showing that the inequality $\frac{7}{16}\cdot\frac{i-k+1}{k(n-i)}\ge \frac{k}{n}$ holds true; this is equivalent to 
\begin{equation*}
7ni + 16k^2 i + 7n \ge 16k^2n + 7kn \, .
\end{equation*}
Since $i > n\cdot (1- \frac{1}{k})$, by assumption, it is enough to show that 
\[
7n^2\left(1- \frac{1}{k}\right) + 16k^2n\left(1- \frac{1}{k}\right) + 7n \ge 16k^2n+7kn \quad \Longleftrightarrow \quad 7n\left(1- \frac{1}{k}\right) +7 \ge 23k \, .
\]
Since $k\ge 2$ and $n\ge 8k$, it is easy to verify that the later inequality holds true. 
The result follows for the case $\frac{ik}{n}\in (k-1,k]$. 

Suppose now that $\frac{ik}{n}\in (k-2,k-1]$. 
We may assume that $k\ge 3$; otherwise, the result follows from Lemma~\ref{mms1}. 
We may also suppose that $\mathbb{P}(H\ge k-1)<\frac{1}{8}$; otherwise, the result follows from the fact that $n\ge 8k$. In other words, we may assume that 
\begin{equation}\label{last1}
\mathbb{P}(H \le k-2) \ge \frac{7}{8} \, .
\end{equation}
Since the difference between a median and the mean of a hypergeometric random variable is at most one (see~\cite{Uhlmann}, \cite{Siegel} or~\cite[Corollary~13]{cooper2009linearly}), it follows that  
\begin{equation}\label{last2}
\mathbb{P}(H \le k-3) < \frac{1}{2} \, .
\end{equation}
If we combine~\eqref{last1} and~\eqref{last2}, we conclude that it holds 
\begin{equation}\label{last3}
\mathbb{P}(H = k-2) \ge \frac{3}{8} \, .
\end{equation}
Now observe that we need to show that $\mathbb{P}(H\ge k-1)\ge \frac{k}{n}$ and so it is enough to show that  
$\mathbb{P}(H = k-1)\ge \frac{k}{n}$. 
Since 
\[
\frac{\mathbb{P}(H=k-1)}{\mathbb{P}(H=k-2)} = \frac{2(i-k+2)}{(k-1)(n-i-1)} 
\]
it follows from~\eqref{last3} that it is enough to show that 
\begin{equation}\label{last4}
\frac{3}{4}\cdot\frac{i-k+2}{(k-1)(n-i-1)} \ge \frac{k}{n} \quad \Longleftrightarrow\quad 3ni+4i(k^2-k) +nk+6n+4k^2 \ge 4k^2n+4k \, .
\end{equation}
Now, the assumptions $i> \frac{n}{2}$ and $i>n\cdot\frac{k-2}{k}$  
imply that 
\[
3ni > \frac{3}{2}n^2 \quad\quad\text{ and }\quad\quad 4i(k^2-k) > 4n\cdot\frac{k-2}{k}\cdot(k^2-k)= 4k^2n-12kn+8n \, ,
\] 
and  therefore~\eqref{last4} will follow upon showing that it holds  
\begin{equation}\label{last5}
 \frac{3}{2}n^2+14n+4k^2 \ge 11kn+4k \, .
\end{equation}
Finally,  the assumption $n\ge k$ implies that  instead of~\eqref{last5}  it is enough to show that 
$\frac{3}{2}n^2+14n \ge 11kn+4n$, which is equivalent to 
\[
n + \frac{20}{3} \ge \frac{22}{3} k \, .
\]
The result follows upon recalling  that we assume $n\ge 8k$. 
\end{proof}

\section{Proof of Theorem~\ref{main2}}
\label{main2_proof}

In this section we prove Theorem~\ref{main2}. 
Observe that in the proof of Lemma~\ref{tce_median_bound} we have used the fact that for any random variable $H$ it holds 
\begin{equation}\label{mad_tce_b1}
\frac{1}{2}\cdot \Exp(|H - \Exp(H)|) = \Prb(H\ge \Exp(H))\cdot \Exp(H - \Exp(H)\mid H\ge \Exp(H)) \, .
\end{equation}

\begin{proof}[Proof of Theorem~\ref{main2}]
Let $H\sim\text{Hyp}(n,i,k)$. Using~\eqref{mad_tce_b1}, we may write   
\[
\Prb(H\ge \Exp(H)) = \frac{1}{2}\cdot \frac{\Exp(|H - \Exp(H)|)}{\Exp(H - \Exp(H)\mid H\ge \Exp(H))} \, .
\]
Now, Theorem~\ref{ratio_hyp_bin_exp} implies that 
\begin{equation}\label{mad_b1}
\Exp(|H - \Exp(H)|) \,\ge\,   
 \frac{e^{-1/8}}{2\sqrt{2}}\cdot \sqrt{\frac{n-1}{n}} \cdot \sqrt{\text{Var}(H)}  \, . 
\end{equation}
Theorem~\ref{TCE_sqrt_bound} implies that 
\begin{eqnarray*}
\Exp(H - \Exp(H)\mid H\ge \Exp(H)) &\le& \lceil\Exp(H)\rceil  + \sqrt{1+\text{Var}(H) \cdot \frac{n-1}{n-k}} - \Exp(H) \\ 
&\le&  1 + \sqrt{1 +\text{Var}(H) \cdot \frac{n-1}{n-k} } \, .
\end{eqnarray*}
Hence, it holds 
\[
\Prb(H\ge \Exp(H)) \ge \frac{e^{-1/8}}{4\sqrt{2}} \cdot \sqrt{\frac{n-1}{n}} \cdot\frac{ \sqrt{\text{Var}(H)} }{1 + \sqrt{1+\text{Var}(H) \cdot \frac{n-1}{n-k}}} \, ,
\]
as desired. 
\end{proof}

We end this section with a proof of Corollary~\ref{cor_bound1}.

\begin{proof}[Proof of Corollary~\ref{cor_bound1}]
Observe that hypotheses imply that $k\ge 2$. 
Suppose first that $k\le n/2$, say  $k =\alpha\cdot n$, for some $\alpha\in (0,1/2]$. 
From Theorem~\ref{main2} we know that 
\[
\Prb(H\ge \Exp(H)) \ge \frac{e^{-1/8}}{4\sqrt{2}} \cdot \sqrt{\frac{n-1}{n}} \cdot\frac{ \sqrt{\text{Var}(H)} }{1 + \sqrt{1+\frac{n-1}{n-k}\cdot \text{Var}(H)}} \, .
\]
Now, it is not difficult to see that for $c>0$ fixed, the function 
\[
f(x)= 1 + \sqrt{1 + c\cdot x} - (1+\sqrt{1+c})\cdot \sqrt{x} 
\]
is decreasing in the interval $[1 , \infty)$. Hence it holds $f(x) \le 0$, for $x\ge 1$. 
Since $\text{Var}(H)\ge 1$,  this implies that  
\[
\frac{ \sqrt{\text{Var}(H)}  }{1 + \sqrt{1+\frac{n-1}{n-k}\cdot \text{Var}(H)}} \ge \frac{\sqrt{\text{Var}(H)}}{ \left(1+ \sqrt{1+\frac{n-1}{n-k}}\right)\cdot \sqrt{\text{Var}(H)} }  \ge \frac{1}{1+ \sqrt{1+\frac{n}{n-k}}}  = \frac{1}{1+\sqrt{1 + \frac{1}{1-\alpha}}}\, .
\]
Since $n\ge 4$, it follows  $\frac{n-1}{n}\ge \frac{3}{4}$ and we conclude that  
\[
\Prb(H\ge \Exp(H)) \ge \frac{e^{-1/8}}{4\sqrt{2}}\cdot \sqrt{\frac{3}{4}} \cdot\frac{1}{1+\sqrt{1+\frac{1}{1-\alpha}}}  \, .
\]
Since the right hand side is decreasing in $\alpha$, the desired result follows upon substituting $\alpha =1/2$. 

Now suppose that $k > n/2$. Let $H_1\sim \text{Hyp}(n,n-i,n-k)$. Then $n-k < n/2$ as well as $1<\frac{(n-i)(n-k)}{n}\le \min\{n-i,n-k\}-2$, and  the result follows upon observing  that 
$\Prb(H\ge \Exp(H)) = \Prb(H_1\ge \Exp(H_1))$.
\end{proof}

\section*{Acknowledgements}
JA was supported by the Czech Science Foundation project 24-11820S (“Automatic Combinatorialist”).

\end{document}